\documentclass[a4paper,12pt]{article}
\setlength{\textwidth}{16.4cm}
\setlength{\textheight}{23.4cm}
\setlength{\oddsidemargin}{-2mm}
\setlength{\topmargin}{-1cm}

\usepackage{latexsym,amssymb}
\usepackage{amsmath}
\usepackage[mathscr]{eucal}
\usepackage{epic,eepic}
\usepackage{color}
\usepackage{enumerate}

\if0
\usepackage[dvipdfmx,%
 anchorcolor=blue,%
 bookmarks=true,%
 bookmarksnumbered=true,%
 colorlinks=true,%
 citecolor=cyan,%
 linkcolor=blue%
]{hyperref}
\fi


\usepackage{amsthm}
\newtheorem{theorem}{Theorem}[section]
\newtheorem{proposition}[theorem]{Proposition}
\newtheorem{lemma}[theorem]{Lemma}

\theoremstyle{definition}
\newtheorem{definition}[theorem]{Definition}

\theoremstyle{remark}
\newtheorem{remark}[theorem]{Remark}
\newtheorem{example}[theorem]{Example}

\makeatletter

\@addtoreset{equation}{section}
\makeatother

\newcommand{\R}{\mathbb{R}}
\newcommand{\E}{\mathbb{E}}
\newcommand{\sd}{\mathsf{d}}
\newcommand{\sJ}{\mathsf{J}}
\newcommand{\cB}{\mathcal{B}}
\newcommand{\cP}{\mathcal{P}}
\newcommand{\bV}{\mathbf{V}}

\newcommand{\ve}{\varepsilon}

\newcommand{\lra}{\longrightarrow}

\newcommand{\CAT}{\mathrm{CAT}}

\newcommand{\diam}{\mathrm{diam}}
\newcommand{\supp}{\mathrm{supp}}

\def\argmin{\mathop{\mathrm{arg\, min}}}

\begin{document}

\title{Barycenters and a law of large numbers\\ in Gromov hyperbolic spaces}

\author{Shin-ichi OHTA\thanks{
Department of Mathematics, Osaka University, Osaka 560-0043, Japan
({\sf s.ohta@math.sci.osaka-u.ac.jp}),
RIKEN Center for Advanced Intelligence Project (AIP),
1-4-1 Nihonbashi, Tokyo 103-0027, Japan}}

\date{\empty}
\maketitle

\begin{abstract}
We investigate barycenters of probability measures on Gromov hyperbolic spaces,
toward development of convex optimization in this class of metric spaces.
We establish a contraction property (the Wasserstein distance between probability measures
provides an upper bound of the distance between their barycenters),
a deterministic approximation of barycenters of uniform distributions on finite points,
and a kind of law of large numbers.
These generalize the corresponding results on CAT(0)-spaces,
up to additional terms depending on the hyperbolicity constant.
\medskip

\noindent
MSC (2020): 60B05 (Primary); 52A55, 53C23 (Secondary).
\end{abstract}


\section{Introduction}

This article is a continuation of \cite{Ohyp} in which we studied discrete-time gradient flows
for geodesically convex functions on (geodesic, proper) Gromov hyperbolic spaces.
The theory of gradient flows for convex functions possesses fundamental importance
in analysis, geometry and optimization theory, and has been well investigated
in some classes of ``Riemannian'' metric spaces including \emph{$\CAT(0)$-spaces}
(nonpositively curved metric spaces in the sense of triangle comparison; we refer to \cite{Babook}).
For ``non-Riemannian'' metric spaces such as normed spaces and Finsler manifolds,
however, much less is known and, in fact, there is a large gap between
properties of gradient flows in Riemannian and Finsler manifolds
(see \cite{Ohyp,OSnc} for further discussions).

Intending to develop optimization theory in possibly non-Riemannian spaces,
in \cite{Ohyp} we studied discrete-time gradient flows
for geodesically convex functions on Gromov hyperbolic spaces
and showed some contraction estimates.
Gromov hyperbolic spaces are metric spaces negatively curved in large-scale,
and it is known that some non-Riemannian Finsler manifolds can be Gromov hyperbolic
(see Example~\ref{ex:Ghyp}).
The class of geodesically convex functions seems, however,
restrictive when one has in mind the local flexibility of the Gromov hyperbolicity condition.
Thereby, it is desirable to generalize the theory of gradient flows
to a wider class of ``convex functions'' (we refer to \cite[\S 3.4]{Ohyp} for related discussions).
As an initial step toward such a generalization, in this article,
we study the (squared) distance function on a Gromov hyperbolic space,
which should be included in the class of generalized convex functions
(in view of Lemmas~\ref{lm:CAT0t}, \ref{lm:Buse}).

Given a probability measure $\mu$ on a metric space $(X,d)$ with finite second moment,
its \emph{barycenter} is defined as a minimizer of the function $x \mapsto \int_X d^2(x,z) \,\mu(dz)$.
If the (squared) distance function is sufficiently convex,
then barycenters enjoy a number of fine properties.
Specifically, in a $\CAT(0)$-space $(X,d)$ for which $d^2$ is strictly convex by definition,
every $\mu$ admits a unique barycenter $\beta_{\mu} \in X$
and we have a \emph{contraction property} $d(\beta_{\mu},\beta_{\nu}) \le W_1(\mu,\nu)$
in terms of the $L^1$-Wasserstein distance $W_1$ (see \cite{St1}).
Moreover, a kind of \emph{law of large numbers} providing the almost sure convergence
to barycenters by recursive applications of the proximal (resolvent) operator was established in \cite{St1}
(we refer to \cite{OP1,Yo3} for some generalizations).

A metric space $(X,d)$ is said to be \emph{Gromov hyperbolic} (attributed to \cite{Gr})
if it is \emph{$\delta$-hyperbolic} for some $\delta \ge 0$ in the sense that
\[ (x|z)_p \ge \min\{ (x|y)_p,(y|z)_p \} -\delta \]
holds for all $p,x,y,z \in X$, where
\[ (x|y)_p :=\frac{1}{2} \big\{ d(p,x)+d(p,y)-d(x,y) \big\} \]
is the \emph{Gromov product}.
This is a large-scale notion of negative curvature and hence, on the one hand,
it is natural to expect some variants of the aforementioned results in $\CAT(0)$-spaces.
On the other hand, since the Gromov hyperbolicity provides no local control
(up to the hyperbolicity constant $\delta$),
one cannot expect very sharp estimates as in the case of $\CAT(0)$-spaces.
Accordingly, our results will have additional terms
(compared with the case of $\CAT(0)$-spaces) those tend to $0$ as $\delta \to 0$.

For a probability measure $\mu$ on a (geodesic) $\delta$-hyperbolic space $(X,d)$,
its barycenter is not unique but lives in a bounded set
(whose diameter tends to $0$ as $\delta \to 0$; see Proposition~\ref{pr:bary}).
For this reason, we introduce the set
\[ \cB(\mu,\ve) :=\bigg\{ x \in X \,\bigg|\,
 \int_X d^2(x,z) \,\mu(dz) \le \inf_{y \in X} \int_X d^2(y,z) \,\mu(dz) +\ve \bigg\} \]
for $\ve \ge 0$, and call it a \emph{barycentric set}
(here we consider only probability measures of finite second moment for simplicity).
Then, we show a contraction property of the form
\[ d(x,y) \le W_1(\mu,\nu) +\sqrt{6\ve} +O(\delta^{1/4}) \]
for $x \in \cB(\mu,\ve)$ and $y \in \cB(\nu,\ve)$; see Theorem~\ref{th:Wcont} for the precise statement.

How to find (or approximate) a barycenter of a given probability measure $\mu$ is a fundamental problem.
In this respect, we show the following law of large numbers
(see Theorem~\ref{th:LLN} for the precise statement):
Given a sequence $(Z_i)_{i \ge 1}$ of independent, identically distributed random variables
with distribution $\mu$ and an arbitrary initial point $S_0 \in X$,
we consider a sequence $(S_k)_{k \ge 0}$ recursively chosen as $S_{k+1}=\gamma((2\tau+1)^{-1})$
for a minimal geodesic $\gamma:[0,1] \lra X$ from $Z_{k+1}$ to $S_k$.
Then, for any $\ve>0$, we have
\[ \E\big[ d^2(p,S_{k_0}) \big] \le \ve +O(\sqrt{\delta}) \]
for some $k_0 \le C/(\ve\sqrt{\delta})$ by taking $\tau$ proportional to $\sqrt{\delta}$,
where $p \in \cB(\mu,0)$ is a barycenter of $\mu$.
We remark that the construction of $S_{k+1}$ from $S_k$
is written by the proximal operator $S_{k+1} \in \sJ_{\tau}^{f_k}(S_k)$
for the distance function $f_k=d^2(Z_{k+1},\cdot)$ (see \eqref{eq:sJ}),
and that such an operation is meaningful only for large $\tau$ compared with $\delta$
due to the local flexibility of $\delta$-hyperbolic spaces.
By a similar analysis, we prove in Theorem~\ref{th:nodice}
a \emph{deterministic approximation} of barycenters of uniform distributions on finite points,
as a generalization of \cite{LP} in $\CAT(0)$-spaces
(we refer to \cite{Ba1,Ba2,Ho,OP1} for some related results).

We briefly mention related results on \emph{hyperbolic groups}
(discrete groups whose Cayley graphs are Gromov hyperbolic).
Laws of large numbers can be formulated in terms of the behavior of the distance function
$d(g_1 g_2 \cdots g_k(x_0),x_0)$ for a sequence $(g_i)_{i \ge 1}$
of independent, identically distributed random variables taking values
in the group of isometric transformations acting on a metric space $(X,d)$
and a base point $x_0 \in X$.
Indeed, Kingman's subadditive ergodic theorem \cite{Ki1,Ki2} ensures that
\[ \lim_{k \to \infty} \frac{d(g_1 g_2 \cdots g_k(x_0),x_0)}{k} \]
exists almost surely and is constant almost everywhere.
We refer to \cite{KL} and the references therein for more details as well as
refined results including the case of hyperbolic groups.
In this context, \emph{central limit theorems} in hyperbolic groups were also established in \cite{BQ,Bj}.
Our law of large numbers (Theorem~\ref{th:LLN})
associated with a probability measure $\mu$ on a Gromov hyperbolic space $X$
is concerned with a more general setting and provides a direct approximation of barycenters.
In this generality, it is difficult even to formulate central limit theorems.

This article is organized as follows.
In Section~\ref{sc:pre}, we review the basics of Gromov hyperbolic spaces
and some facts on barycenters in $\CAT(0)$-spaces.
In Section~\ref{sc:bary}, we introduce and analyze barycentric sets for probability measures
on Gromov hyperbolic spaces.
Then we discuss the Wasserstein contraction property, a deterministic approximation,
and a law of large numbers in Sections~\ref{sc:Wcont}, \ref{sc:nodice}, and \ref{sc:LLN},
respectively.

\section{Preliminaries}\label{sc:pre}

We review the basics of Gromov hyperbolic spaces,
as well as some facts on barycenters in $\CAT(0)$-spaces related to our results.
For $a,b \in \R$, we set $a \wedge b:=\min\{a,b\}$ and $a \vee b:=\max\{a,b\}$.

\subsection{Gromov hyperbolic spaces}\label{ssc:Ghyp}

Besides the original paper \cite{Gr}, we refer to \cite{Bo,BH,DSU,Ro,Va}
for the basics and various applications of the Gromov hyperbolicity.

Let $(X,d)$ be a metric space.
For three points $x,y,z \in X$, we define the \emph{Gromov product} $(y|z)_x$ by
\[ (y|z)_x :=\frac{1}{2} \big\{ d(x,y) +d(x,z) -d(y,z) \big\}. \]
Observe from the triangle inequality that
$0 \le (y|z)_x \le d(x,y) \wedge d(x,z)$.
In the Euclidean plane $\R^2$,
$(y|z)_x$ is understood as the distance from $x$
to the intersection of the triangle $\triangle xyz$ and its inscribed circle.
If $x,y,z$ are the endpoints of a tripod,
then $(y|z)_x$ coincides with the distance from $x$ to the branching point.

\begin{definition}[Gromov hyperbolic spaces]\label{df:Ghyp}
A metric space $(X,d)$ is said to be \emph{$\delta$-hyperbolic} for $\delta \ge 0$ if
\begin{equation}\label{eq:d-hyp}
(x|z)_p \ge (x|y)_p \wedge (y|z)_p -\delta
\end{equation}
holds for all $p,x,y,z \in X$.
We say that $(X,d)$ is \emph{Gromov hyperbolic}
if it is $\delta$-hyperbolic for some $\delta \ge 0$.
\end{definition}

Since the Gromov product does not exceed the diameter $\diam(X):=\sup_{x,y \in X}d(x,y)$,
if $\diam(X) \le \delta$, then $(X,d)$ is $\delta$-hyperbolic.
This also means that the local structure of $X$ (up to size $\delta$)
is not influential in the $\delta$-hyperbolicity.
Another fact worth mentioning is that, if \eqref{eq:d-hyp} holds for some $p \in X$ and all $x,y,z \in X$,
then $(X,d)$ is $2\delta$-hyperbolic (see \cite[Corollary~1.1.B]{Gr}).

The Gromov hyperbolicity can be regarded as a large-scale notion of negative (sectional) curvature.
We recall some fundamental examples (see also \cite[\S 1]{Gr}).

\begin{example}\label{ex:Ghyp}
\begin{enumerate}[(a)]
\item
Complete, simply connected Riemannian manifolds of sectional curvature $\le -1$
(or, more generally, $\CAT(-1)$-spaces)
are Gromov hyperbolic (see, for example, \cite[\S 1.5]{Gr}).

\item
An important difference between the class of $\CAT(-1)$-spaces and that of Gromov hyperbolic spaces
is that the latter admits non-Riemannian Finsler manifolds.
For instance, \emph{Hilbert geometry} on a bounded convex domain in the Euclidean space
is Gromov hyperbolic under mild convexity and smoothness conditions
(see \cite{KN}, \cite[\S 6.5]{Obook}).

\item
The definition \eqref{eq:d-hyp} makes sense for discrete spaces.
In fact, the Gromov hyperbolicity has found rich applications in group theory
(a discrete group whose Cayley graph satisfies the Gromov hyperbolicity
is called a \emph{hyperbolic group}; we refer to \cite{Bo,Gr}, \cite[Part~III]{BH}).
In the sequel, however, we do not consider discrete spaces.

\item
Assume that a metric space $(X,d_X)$ admits a map $\phi:T \lra X$ from a tree $(T,d_T)$ such that
$d_X(\phi(a),\phi(b))=d_T(a,b)$ for all $a,b \in T$
and the $\delta$-neighborhood $B(\phi(T),\delta)$ of $\phi(T)$ covers $X$.
Then, since $(T,d_T)$ is $0$-hyperbolic, we can easily see that $(X,d_X)$ is $6\delta$-hyperbolic.
\end{enumerate}
\end{example}

We call $(X,d)$ a \emph{geodesic space}
if any two points $x,y \in X$ are connected by a \emph{minimal geodesic}
$\gamma:[0,1] \lra X$ satisfying $\gamma(0)=x$, $\gamma(1)=y$, and
$d(\gamma(s),\gamma(t))=|s-t| \cdot d(x,y)$ for all $s,t \in [0,1]$.
In this case, there are several characterizations of the Gromov hyperbolicity
(see, for example, \cite[\S 6]{Gr}, \cite[\S III.H.1]{BH}).
We also remark that, by \cite[Theorem~4.1]{BS},
every $\delta$-hyperbolic metric space can be isometrically embedded
into a complete, geodesic $\delta$-hyperbolic space.

\subsection{$\CAT(0)$-spaces}\label{ssc:CAT0}

A geodesic space $(X,d)$ is called a \emph{$\CAT(0)$-space}
if, for any $x,y,z \in X$ and any minimal geodesic $\gamma:[0,1] \lra X$ from $x$ to $y$,
\begin{equation}\label{eq:CAT}
d^2\big( z,\gamma(t) \big) \le (1-t)d^2(z,x) +td^2(z,y) -(1-t)td^2(x,y)
\end{equation}
holds for all $t \in (0,1)$.
A complete, simply connected Riemannian manifold is a $\CAT(0)$-space
if and only if its sectional curvature is nonpositive everywhere.
Moreover, there are a number of rich classes of non-smooth $\CAT(0)$-spaces
such as Euclidean buildings, trees, phylogenetic tree spaces,
and the orthoscheme complexes of modular lattices (see \cite{Babook,BHV,CCHO,Jobook}).

The $\CAT(0)$-inequality \eqref{eq:CAT} can be regarded as the uniform strict convexity
of the squared distance function $d^2(z,\cdot)$,
and such a convexity is known to be quite useful to study barycenters.
In fact, as we mentioned in the introduction,
every probability measure $\mu$ on $X$ with finite second (or first) moment
admits a unique barycenter $\beta_{\mu} \in X$,
and the contraction property $d(\beta_{\mu},\beta_{\nu}) \le W_1(\mu,\nu)$ holds.
Moreover, a law of large numbers was established in \cite{St1},
followed by many variants and generalizations \cite{Ba2,Ho,LP,OP1,Yo3}
(see also \cite{EH,Na} for related works on a different notion of barycenter).

\section{Barycentric sets}\label{sc:bary}

Henceforth, throughout the article, let $(X,d)$ be a geodesic $\delta$-hyperbolic space.
We first recall the Wasserstein distance between probability measures
(we refer to \cite{Vi} for further reading).

Denote by $\cP^p(X)$ ($p \in [1,\infty)$) the set of Borel probability measures on $X$
of finite $p$-th moment, that is, $\mu \in \cP^p(X)$ if
\[ \int_X d^p(x_0,x) \,\mu(dx) <\infty \]
for some (and hence any) $x_0 \in X$.
For $\mu,\nu \in \cP^p(X)$, the \emph{$L^p$-Wasserstein distance}
(or the \emph{Kantorovich distance}) between $\mu$ and $\nu$ is defined by
\[ W_p(\mu,\nu) :=\inf_{\pi} \bigg( \int_{X \times X} d^p(x,y) \,\pi(dx\,dy) \bigg)^{1/p}, \]
where $\pi$ runs over all \emph{couplings} of $(\mu,\nu)$
(namely probability measures on $X \times X$ with marginals $\mu$ and $\nu$).
A coupling $\pi$ attaining the above infimum is called an \emph{$L^p$-optimal coupling} of $(\mu,\nu)$.
Observe that
\[ W_p(\delta_x,\mu) =\bigg( \int_X d^p(x,y) \,\mu(dy) \bigg)^{1/p} \]
for all $x \in X$ and $\mu \in \cP^p(X)$, where $\delta_x$ denotes the Dirac mass at $x$.

Note that $\cP^2(X) \subset \cP^1(X)$ by the H\"older (or Cauchy--Schwarz) inequality.
According to \cite{St1}, we will consider barycenters of probability measures
not only in $\cP^2(X)$ but also in $\cP^1(X)$.
Fix an arbitrary point $x_0 \in X$.
For $\mu \in \cP^1(X)$, we define
\begin{equation}\label{eq:Vx}
\bV\!_{x_0}(\mu) :=\inf_{x \in X} \int_X \big\{ d^2(x,z) -d^2(x_0,z) \big\} \,\mu(dz).
\end{equation}
We remark that the integral above is well-defined since
\begin{align*}
\int_X |d^2(x,z) -d^2(x_0,z)| \,\mu(dz)
&\le \int_X d(x,x_0) \big\{ d(x,z)+d(x_0,z) \big\} \,\mu(dz) \\
&= d(x,x_0) \big\{ W_1(\delta_x,\mu) +W_1(\delta_{x_0},\mu) \big\}
 <\infty
\end{align*}
by the triangle inequality.

In a complete $\CAT(0)$-space,
every $\mu \in \cP^1(X)$ admits a unique point $x \in X$ attaining the infimum in \eqref{eq:Vx}.
Such a point $x$ is independent of the choice of $x_0$
and called the \emph{barycenter} of (or the \emph{center of mass} for) $\mu$
(see \cite[Proposition~4.3]{St1}, and \cite{Yo1} for the case of $\CAT(1)$-spaces of small radii).
More precisely, what we consider is an $L^2$-barycenter involving the squared distance.
We refer to \cite{Af,Yo2} for related works on $L^p$-barycenters.

In a $\delta$-hyperbolic space, however, we do not have a unique barycenter.
For this reason, we introduce the following set:
\begin{equation}\label{eq:bary}
\cB(\mu,\ve) :=\bigg\{ x \in X \,\bigg|\,
 \int_X \big\{ d^2(x,z) -d^2(x_0,z) \big\} \,\mu(dz) \le \bV\!_{x_0}(\mu) +\ve \bigg\}
\end{equation}
for $\mu \in \cP^1(X)$ and $\ve \ge 0$.
We shall call $\cB(\mu,\ve)$ a \emph{barycentric set}.
We remark that $\cB(\mu,\ve)$ is independent of the choice of $x_0$.
Note also that $\cB(\mu,0)$ may be empty (unless $X$ is proper),
while $\cB(\mu,\ve) \neq \emptyset$ for any $\ve>0$.

Our goal in this section is to estimate the diameter of $\cB(\mu,\ve)$
in terms of $\ve$ and $\delta$.
To this end, we first generalize the $\CAT(0)$-inequality \eqref{eq:CAT} to $\delta$-hyperbolic spaces,
with an inevitable additional term depending on $\delta$.

\begin{lemma}[$\CAT(0)+\delta$]\label{lm:CAT0}
For any $x,y,z \in X$ and any midpoint $w$ between $x$ and $y$,
we have
\begin{equation}\label{eq:CAT0}
d^2(z,w) \le \frac{d^2(z,x)}{2} +\frac{d^2(z,y)}{2} -\frac{d^2(x,y)}{4}
 +2\delta \big\{ d(z,x) +d(z,y) \big\} +4\delta^2.
\end{equation}
\end{lemma}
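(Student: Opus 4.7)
\emph{Plan.} The key idea is to first derive a linear (in distances) bound on $d(z,w)$ directly from $\delta$-hyperbolicity, and then square it, absorbing the residual error into the extra $\delta$-dependent terms by way of the triangle inequality.

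First, I would apply the defining $\delta$-hyperbolicity inequality \eqref{eq:d-hyp} with base point $z$ and with the three remaining points taken to be $x,w,y$, namely
\[
(x|y)_z \ge (x|w)_z \wedge (w|y)_z - \delta.
\]
Expanding the three Gromov products explicitly and using that $w$ is a midpoint, so that $d(x,w)=d(w,y)=d(x,y)/2$, the two $d(x,y)/2$ contributions in the minimum on the right-hand side are symmetric, and $(x|w)_z \wedge (w|y)_z$ depends only on $\min\{d(z,x),d(z,y)\}$. After this cancellation the inequality is equivalent to
\[
d(z,w) \le \max\{d(z,x),d(z,y)\} - \frac{d(x,y)}{2} + 2\delta.
\]
The right-hand side is automatically nonnegative because $d(x,y) \le d(z,x)+d(z,y) \le 2\max\{d(z,x),d(z,y)\}$, so squaring preserves the inequality.

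Write $M,m$ for the larger and the smaller of $d(z,x)$ and $d(z,y)$, and $c:=d(x,y)$. The task then reduces to the purely algebraic verification
\[
(M-c/2+2\delta)^2 \le \tfrac{1}{2}(M^2+m^2) - c^2/4 + 2\delta(M+m) + 4\delta^2.
\]
The $4\delta^2$ terms cancel outright; the linear-in-$\delta$ contributions combine into $2\delta(m+c-M)$, which is nonnegative by the triangle inequality $c \le m+M$; and the purely metric surplus simplifies to $m^2 - (M-c)^2 \ge 0$. The latter is equivalent to $|M-c| \le m$, i.e.\ to the pair of triangle inequalities $|d(z,x)-d(z,y)| \le c$ and $c \le d(z,x)+d(z,y)$, so both surpluses are automatic.

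The main hazard I anticipate is the algebra in the second step: one has to recognize that after squaring the linear bound, the leftover terms factor so that they are controlled \emph{precisely} by the metric triangle inequalities applied to $\{d(z,x),d(z,y),d(x,y)\}$, with no slack left over. The midpoint hypothesis is used essentially: it is what symmetrizes $(x|w)_z$ and $(w|y)_z$ so that the $\min$ in \eqref{eq:d-hyp} reduces to $\min\{d(z,x),d(z,y)\}$ with a clean cancellation of the $c/2$ pieces. For a generic point on the geodesic $[x,y]$ one would instead obtain a tilted interpolation inequality, not a $\CAT(0)+\delta$-type bound.
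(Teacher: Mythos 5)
Your proposal is correct and takes essentially the same route as the paper: both derive the linear bound $d(z,w) \le d(z,x) \vee d(z,y) - \tfrac{1}{2}d(x,y) + 2\delta$ from the four-point condition (the paper applies it at base point $w$, using $(x|y)_w=0$; you apply it at base point $z$ with the triple $x,w,y$ — the resulting estimate is identical) and then square, controlling the surplus terms exactly by the triangle inequalities among $d(z,x)$, $d(z,y)$, $d(x,y)$. Your algebraic verification after squaring is just a reorganization of the paper's computation, so no further comment is needed.
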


\begin{proof}
Since $w$ is a midpoint of $x$ and $y$ (namely $d(x,w)=d(w,y)=d(x,y)/2$),
we have $(x|y)_w=0$.
Then the $\delta$-hyperbolicity \eqref{eq:d-hyp} implies
\[ 0 \ge (x|z)_w \wedge (z|y)_w -\delta
 =\frac{1}{2} \bigg\{ d(z,w) +\frac{d(x,y)}{2} -d(z,x) \vee d(z,y) \bigg\} -\delta. \]
Hence,
\[ d^2(z,w) \le \bigg( d(z,x) \vee d(z,y) -\frac{d(x,y)}{2} +2\delta \bigg)^2. \]
Setting $a=d(z,x) \vee d(z,y)$ and $b=d(z,x) \wedge d(z,y)$,
we observe that
\begin{align*}
&\bigg( a-\frac{d(x,y)}{2} \bigg)^2 \\
&= a^2 +\frac{d^2(x,y)}{4} -ad(x,y) \\
&= \frac{a^2 +b^2}{2} -\frac{d^2(x,y)}{4}
 -\frac{1}{2} \big\{ d(x,y)-(a-b) \big\} \big\{ (a+b) -d(x,y) \big\} \\
&\le \frac{d^2(z,x)}{2} +\frac{d^2(z,y)}{2} -\frac{d^2(x,y)}{4},
\end{align*}
since $a+b \ge d(x,y) \ge a-b$ by the triangle inequality.
Moreover, we find
\[ 0 \le a -\frac{d(x,y)}{2}
 \le a -\frac{a-b}{2} =\frac{d(z,x) +d(z,y)}{2}. \]
Combining these, we obtain
\begin{align*}
d^2(z,w)
&\le \bigg( a-\frac{d(x,y)}{2} \bigg)^2 +4\delta \bigg( a-\frac{d(x,y)}{2} \bigg) +4\delta^2 \\
&\le \frac{d^2(z,x)}{2} +\frac{d^2(z,y)}{2} -\frac{d^2(x,y)}{4}
 +2\delta \big\{ d(z,x) +d(z,y) \big\} +4\delta^2
\end{align*}
as desired.
\end{proof}

We also present the corresponding inequality for general intermediate points
between $x$ and $y$.

\begin{lemma}[General intermediate points]\label{lm:CAT0t}
For any $x,y,z \in X$ and any minimal geodesic $\gamma:[0,1] \lra X$ from $x$ to $y$,
we have
\begin{equation}\label{eq:CAT0t}
d^2\big( z,\gamma(t) \big) \le (1-t)d^2(z,x) +td^2(z,y) -(1-t)td^2(x,y)
 +4\delta \big( d(z,x) \vee d(z,y) \big) +4\delta^2
\end{equation}
for all $t \in (0,1)$.
\end{lemma}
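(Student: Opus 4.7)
The plan is to adapt the proof of Lemma~\ref{lm:CAT0} to the asymmetric case, since the key input there (that $(x|y)_w=0$ for a midpoint $w$) holds for any point $w=\gamma(t)$ on a minimal geodesic: indeed $d(x,w)+d(w,y)=d(x,y)$ gives $(x|y)_w=0$.

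Applying the $\delta$-hyperbolicity \eqref{eq:d-hyp} with base point $w$ to $x,y,z$ yields $(x|z)_w \wedge (z|y)_w \le \delta$. Writing $a=d(z,x)$, $b=d(z,y)$, $L=d(x,y)$, the case $(x|z)_w\le\delta$ unfolds to $d(z,w) \le a-tL+2\delta$, and the case $(z|y)_w\le\delta$ to $d(z,w) \le b-(1-t)L+2\delta$. Hence, in either event,
\[
 d(z,w) \le \alpha +2\delta, \qquad \alpha := \max\bigl\{ a-tL,\ b-(1-t)L\bigr\}.
\]
The triangle inequality $a+b\ge L$ guarantees $\alpha\ge 0$, and trivially $\alpha \le a\vee b$, so squaring gives $d^2(z,w) \le \alpha^2 + 4\delta(a\vee b) + 4\delta^2$, which matches the shape of the desired right-hand side.

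It then remains to verify the purely algebraic claim
\[
 \alpha^2 \le (1-t)a^2 + tb^2 -(1-t)tL^2.
\]
Treating the two candidates for $\alpha$ separately, the inequality $(a-tL)^2 \le (1-t)a^2+tb^2-(1-t)tL^2$ reduces after expansion and division by $t$ to $(a-L)^2 \le b^2$, i.e.\ $|a-L|\le b$, which is exactly the triangle inequality for $x,y,z$. The second candidate $(b-(1-t)L)^2$ is handled symmetrically and reduces to $|b-L|\le a$. Combining these with the preceding estimate yields \eqref{eq:CAT0t}.

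The only subtle point is the max-over-two-cases structure: one must recognize that a single upper bound on $d(z,w)$ suffices because at least one of the two Gromov-product inequalities holds, and that squaring the max-bound is justified by $\alpha\ge 0$. Once this is in place, both algebraic verifications are immediate consequences of the triangle inequality, exactly as in the midpoint case $t=1/2$ treated in Lemma~\ref{lm:CAT0}.
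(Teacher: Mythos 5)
Your proposal is correct and follows essentially the same route as the paper: use $(x|y)_{\gamma(t)}=0$ and the $\delta$-hyperbolicity to bound $d(z,\gamma(t))$ by $\bigl(d(z,x)-td(x,y)\bigr)\vee\bigl(d(z,y)-(1-t)d(x,y)\bigr)+2\delta$, then verify the same algebraic claim that the square of this maximum is at most $(1-t)d^2(z,x)+td^2(z,y)-(1-t)td^2(x,y)$ via the triangle inequality. The only cosmetic difference is that you make the nonnegativity of the maximum and the bound by $d(z,x)\vee d(z,y)$ explicit, which the paper leaves implicit.
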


\begin{proof}
Put $w=\gamma(t)$, then we have $(x|y)_w=0$ again.
Thus, it follows from the $\delta$-hyperbolicity \eqref{eq:d-hyp} that
\begin{align*}
0 &\ge (x|z)_w \wedge (z|y)_w -\delta \\
&= \frac{1}{2} \Big\{ d(z,w) +\big( td(x,y) -d(z,x) \big) \wedge \big( (1-t)d(x,y) -d(z,y) \big) \Big\} -\delta,
\end{align*}
and hence
\[ d^2(z,w) \le
 \Big\{ \big( d(z,x)-td(x,y) \big) \vee \big( d(z,y)-(1-t)d(x,y) \big) +2\delta \Big\}^2. \]
Now, we claim that
\begin{align}
&\Big\{ \big( d(z,x)-td(x,y) \big) \vee \big( d(z,y)-(1-t)d(x,y) \big) \Big\}^2 \nonumber\\
&\le (1-t)d^2(z,x) +td^2(z,y) -(1-t)td^2(x,y) \label{eq:claim}
\end{align}
holds.
In fact, the inequality
\[ \big( d(z,x)-td(x,y) \big)^2 \le (1-t)d^2(z,x) +td^2(z,y) -(1-t)td^2(x,y) \]
can be rearranged as
\[ td^2(z,x) -2td(z,x)d(x,y) +td^2(x,y) \le td^2(z,y), \]
which holds true by the triangle inequality.
We can similarly show
\[ \big( d(z,y)-(1-t)d(x,y) \big)^2 \le (1-t)d^2(z,x) +td^2(z,y) -(1-t)td^2(x,y), \]
thereby we obtain \eqref{eq:claim}.
Therefore, we deduce that
\[ d^2(z,w) \le (1-t)d^2(z,x) +td^2(z,y) -(1-t)td^2(x,y)
 +4\delta \big( d(z,x) \vee d(z,y) \big) +4\delta^2. \]
\end{proof}

Note that, in the case of $\delta=0$,
\eqref{eq:CAT0t} boils down to the $\CAT(0)$-inequality \eqref{eq:CAT}.
For $\delta=0$, moreover, one can infer \eqref{eq:CAT0t} from \eqref{eq:CAT0} 
by the standard subdivision argument
(see, for example, (ii) $\Rightarrow$ (iii) of \cite[Theorem~1.3.3]{Babook}).
For $\delta>0$, however, iterating subdivisions makes the additional term (depending on $\delta$) diverge,
thereby we gave a direct argument to prove Lemma~\ref{lm:CAT0t}.
We also remark that \eqref{eq:CAT0t} is not meaningful for $t$ close to $0$ or $1$,
since then the triangle inequality could give a better estimate.

We are ready to estimate the diameter of barycentric sets $\cB(\mu,\ve)$ defined in \eqref{eq:bary}.
Recall that $x_0 \in X$ is an arbitrary point fixed at the beginning of this section.

\begin{proposition}[Diameter of $\cB(\mu,\ve)$]\label{pr:bary}
For any $\mu \in \cP^1(X)$, $x,y \in X$ and any midpoint $w$ between $x$ and $y$,
we have
\begin{align*}
\int_X \big\{ d^2(w,z)-d^2(x_0,z) \big\} \,\mu(dz)
&\le \int_X \bigg\{ \frac{d^2(x,z)}{2} +\frac{d^2(y,z)}{2} -d^2(x_0,z) \bigg\} \,\mu(dz) \\
&\quad -\frac{d^2(x,y)}{4}
 +2\delta \big\{ W_1(\delta_x,\mu) +W_1(\delta_y,\mu) \big\} +4\delta^2.
\end{align*}
In particular, for any $x \in \cB(\mu,\ve_1)$ and $y \in \cB(\mu,\ve_2)$ with $\ve_1,\ve_2 \ge 0$,
we have
\begin{equation}\label{eq:vineq}
d(x,y) \le
 \sqrt{ 8\delta \big\{ W_1(\delta_x,\mu) +W_1(\delta_y,\mu) \big\} +16\delta^2 +2(\ve_1 +\ve_2)}.
\end{equation}
\end{proposition}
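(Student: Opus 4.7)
The plan is to derive the integral inequality by applying Lemma~\ref{lm:CAT0} pointwise and then integrating against $\mu$, after which the variational characterization of barycentric sets will immediately give the quantitative diameter estimate.

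First I would fix $z \in X$ and apply Lemma~\ref{lm:CAT0} to the triple $(x,y,z)$ with midpoint $w$, obtaining
\[ d^2(w,z) \le \frac{d^2(x,z)}{2} +\frac{d^2(y,z)}{2} -\frac{d^2(x,y)}{4}
 +2\delta\{d(x,z)+d(y,z)\} +4\delta^2. \]
Subtracting $d^2(x_0,z)$ from both sides (this is harmless since the subtracted quantity is integrable against $\mu$, as noted just before \eqref{eq:Vx}) and integrating with respect to $\mu(dz)$ yields the first displayed inequality, once one recognizes that $\int_X d(x,z)\,\mu(dz) = W_1(\delta_x,\mu)$ and similarly for $y$.

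For the diameter bound \eqref{eq:vineq}, I would combine the first inequality with the defining property of barycentric sets. Since $x \in \cB(\mu,\ve_1)$ and $y \in \cB(\mu,\ve_2)$, each of the integrals $\int\{d^2(x,z)-d^2(x_0,z)\}\,\mu(dz)$ and $\int\{d^2(y,z)-d^2(x_0,z)\}\,\mu(dz)$ is bounded above by $\bV_{x_0}(\mu)$ plus the corresponding $\ve_i$. On the other hand, by definition of $\bV_{x_0}(\mu)$ as an infimum, the left-hand side at $w$ is bounded below by $\bV_{x_0}(\mu)$. Substituting these into the first part of the proposition, the $\bV_{x_0}(\mu)$ contributions cancel, leaving
\[ \frac{d^2(x,y)}{4} \le \frac{\ve_1+\ve_2}{2}
 +2\delta\{W_1(\delta_x,\mu)+W_1(\delta_y,\mu)\} +4\delta^2, \]
and multiplying by $4$ and taking square roots produces \eqref{eq:vineq}.

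I do not foresee any serious obstacle: the essential analytic content is already packaged in Lemma~\ref{lm:CAT0}, and everything else is bookkeeping. The only minor care required is verifying that the integrals are finite and that the subtraction/addition of $d^2(x_0,z)$ is legitimate for $\mu \in \cP^1(X)$, which is exactly the point of the computation displayed just after \eqref{eq:Vx}. Thus the proof should be short and essentially a one-line integration followed by a one-line optimization.
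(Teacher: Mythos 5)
Your proposal is correct and follows the paper's own argument exactly: integrate Lemma~\ref{lm:CAT0} in $z$ against $\mu$ to get the first inequality, then use the definition of $\bV\!_{x_0}(\mu)$ together with the membership $x \in \cB(\mu,\ve_1)$, $y \in \cB(\mu,\ve_2)$ (and the existence of a midpoint, which the geodesic assumption guarantees) to cancel the $\bV\!_{x_0}(\mu)$ terms and obtain \eqref{eq:vineq}. No gaps.
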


\begin{proof}
The first assertion is shown by integrating \eqref{eq:CAT0} in $z$ with respect to $\mu$.
Then, when $x \in \cB(\mu,\ve_1)$ and $y \in \cB(\mu,\ve_2)$, we find
\begin{align*}
\bV\!_{x_0}(\mu)
&\le \int_X \big\{ d^2(w,z)-d^2(x_0,z) \big\} \,\mu(dz) \\
&\le \bV\!_{x_0}(\mu) +\frac{\ve_1 +\ve_2}{2} -\frac{d^2(x,y)}{4}
 +2\delta \big\{ W_1(\delta_x,\mu) +W_1(\delta_y,\mu) \big\} +4\delta^2.
\end{align*}
Therefore, we obtain
\[ d^2(x,y) \le 8\delta \big\{ W_1(\delta_x,\mu) +W_1(\delta_y,\mu) \big\} +16\delta^2 +2(\ve_1 +\ve_2). \]
This completes the proof.
\end{proof}

The second assertion \eqref{eq:vineq} (with $\ve_2 =0$) can be regarded as a generalization of
the \emph{variance inequality} (see \cite[Proposition~4.4]{St1};
the reverse inequality under lower curvature bounds can be found in \cite{Obary,St2}).
Note that, when we are interested in the case of $\ve_1 =\ve_2 =0$ (the set of barycenters),
\eqref{eq:vineq} implies $\diam(\cB(\mu,0)) \le O(\sqrt{\delta})$ as $\delta \to 0$.

\begin{remark}[When $\mu \in \cP^2(X)$]\label{eq:L^2}
In the case of $\mu \in \cP^2(X)$, instead of $\bV\!_{x_0}(\mu)$ as in \eqref{eq:Vx},
we can directly consider
\[ \inf_{x \in X} W_2^2(\delta_x,\mu) =\inf_{x \in X} \int_X d^2(x,z) \,\mu(dz), \]
which is called the \emph{variance} of $\mu$.
One can simply write down the first assertion of Proposition~\ref{pr:bary} as
\[ W_2^2(\delta_w,\mu)
 \le \frac{W_2^2(\delta_x,\mu)}{2} +\frac{W_2^2(\delta_y,\mu)}{2} -\frac{d^2(x,y)}{4}
 +2\delta \big\{ W_1(\delta_x,\mu) +W_1(\delta_y,\mu) \big\} +4\delta^2 \]
and, if $W_2^2(\delta_x,\mu) \le \inf_{p \in X} W_2^2(\delta_p,\mu) +\ve_1$
and $W_2^2(\delta_y,\mu) \le \inf_{p \in X} W_2^2(\delta_p,\mu) +\ve_2$,
then we have \eqref{eq:vineq}.
\end{remark}

\section{Wasserstein contraction property}\label{sc:Wcont}

We next consider a contraction property in terms of the Wasserstein distance,
which in the case of (complete) $\CAT(0)$-spaces means that
\[ d(\beta_{\mu},\beta_{\nu}) \le W_1(\mu,\nu) \]
holds for $\mu,\nu \in \cP^1(X)$, where $\beta_{\mu}$ and $\beta_{\nu}$
are the (unique) barycenters of $\mu$ and $\nu$, respectively
(see \cite[Theorem~6.3]{St1}).
In the current setting of $\delta$-hyperbolic spaces,
we shall estimate the distance between points in the barycentric sets.

We begin with a generalization of the \emph{Busemann nonpositive curvature}
(Busemann NPC for short).
We say that a geodesic space $(Z,d_Z)$ has the Busemann NPC (or $(Z,d_Z)$ is \emph{convex})
if, for any geodesics $\xi,\zeta:[0,1] \lra Z$ with $\xi(0)=\zeta(0)$,
we have $d_Z(\xi(t),\zeta(t)) \le td_Z(\xi(1),\zeta(1))$ for all $t \in [0,1]$.
Then, by the triangle inequality,
\[ d_Z\big( \xi(t),\zeta(t) \big) \le (1-t)d_Z\big( \xi(0),\zeta(0) \big) +td_Z\big( \xi(1),\zeta(1) \big) \]
holds for any geodesics $\xi,\zeta:[0,1] \lra Z$ (regardless of whether $\xi(0)=\zeta(0)$ or not).
We refer to \cite{BH,Jobook} for further reading.

\begin{remark}[Busemann NPC versus $\CAT(0)$]\label{rm:Buse}
In his celebrated paper \cite{Bu}, Busemann showed that
a complete, simply connected Riemannian manifold has the Busemann NPC
if and only if its sectional curvature is nonpositive everywhere.
Nonetheless, in general, the Busemann NPC is a weaker condition than the $\CAT(0)$-property.
On the one hand, it is easily seen that $\CAT(0)$-spaces have the Busemann NPC.
On the other hand, every strictly convex Banach space has the Busemann NPC,
whereas it is a $\CAT(0)$-space if and only if it is a Hilbert space.
\end{remark}

Recall that $(X,d)$ will always denote a geodesic $\delta$-hyperbolic space.
It is known that $\delta$-hyperbolic spaces have the Busemann NPC
up to an additive constant depending only on $\delta$ (see \cite[\S 7.4]{Gr}).
We give an outline of the proof for completeness.

\begin{lemma}[Busemann NPC${}+\delta$]\label{lm:Buse}
Let $x,y,p,q \in X$.
For any minimal geodesics $\gamma:[0,1] \lra X$ from $x$ to $p$
and $\eta:[0,1] \lra X$ from $y$ to $q$, we have
\begin{equation}\label{eq:Buse}
d\big( \gamma(t),\eta(t) \big) \le (1-t)d(x,y) +td(p,q) +8\delta
\end{equation}
for all $t \in (0,1)$.
\end{lemma}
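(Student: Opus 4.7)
My plan is to prove \eqref{eq:Buse} in two stages. First I would establish the special case in which the two geodesics share a common initial point, with the sharper additive constant $4\delta$. The general case then follows by inserting an auxiliary geodesic from $x$ to $q$, applying the special case twice, and invoking the triangle inequality, which doubles the additive term to the claimed $8\delta$.

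For the special case, let $\gamma,\sigma:[0,1] \lra X$ be minimal geodesics with $\gamma(0) = \sigma(0) = x$, $\gamma(1) = p$, $\sigma(1) = q$. Applying the $\delta$-hyperbolicity \eqref{eq:d-hyp} twice in succession to the four points $\gamma(t), p, q, \sigma(t)$ at base $x$ yields the iterated inequality
\[ \big( \gamma(t) \big| \sigma(t) \big)_x \ge \min\big\{ (\gamma(t)|p)_x,\, (p|q)_x,\, (q|\sigma(t))_x \big\} -2\delta. \]
Since $\gamma(t)$ lies on a minimal geodesic from $x$ to $p$, one has $(\gamma(t)|p)_x = t\, d(x,p)$, and symmetrically $(q|\sigma(t))_x = t\, d(x,q)$. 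Unfolding the definition of $(\gamma(t)|\sigma(t))_x$ then gives
\[ d\big( \gamma(t),\sigma(t) \big) \le t\, d(x,p) +t\, d(x,q) -2 \min\big\{ t\, d(x,p),\, (p|q)_x,\, t\, d(x,q) \big\} +4\delta. \]
A short three-case analysis on which term achieves the minimum shows the right-hand side is bounded above by $t\, d(p,q) +4\delta$: the cases $t\, d(x,p)$ and $t\, d(x,q)$ reduce to the triangle inequality $|d(x,p) -d(x,q)| \le d(p,q)$, while the case $(p|q)_x$ simplifies, after using $d(p,q) \le d(x,p) +d(x,q)$, to the same bound.

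For the general case with $\gamma: x \to p$ and $\eta: y \to q$ as in the statement, I would pick any minimal geodesic $\sigma:[0,1] \lra X$ from $x$ to $q$. The special case applied to $(\gamma,\sigma)$ at parameter $t$ gives $d(\gamma(t),\sigma(t)) \le t\, d(p,q) +4\delta$, and applied to the reversals of $\sigma$ and $\eta$ (both starting at $q$) at parameter $1-t$ gives $d(\sigma(t),\eta(t)) \le (1-t)\, d(x,y) +4\delta$. Summing these via the triangle inequality yields \eqref{eq:Buse}. The main technical point is the special case: once the right iterated Gromov-product inequality is identified and the three-way case analysis carried out, the two-geodesic reduction to the general statement is routine. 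The constant $8\delta$ is unlikely to be sharp, but any additive bound of the form $C\delta$ is sufficient for the subsequent applications.
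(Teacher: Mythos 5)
Your proof is correct, and its overall decomposition coincides with the paper's: both arguments insert an auxiliary minimal geodesic from $x$ to $q$, establish the estimate $d(\gamma(t),\sigma(t)) \le t\,d(p,q)+4\delta$ for two geodesics issuing from a common point, apply it twice (once after reversing orientation so the common point is $q$), and conclude by the triangle inequality, so the constant $8\delta$ arises in exactly the same way. Where you genuinely differ is in how the common-endpoint estimate is proved. The paper maps the geodesic triangle $\triangle xpq$ to a tripod $(Y,d_Y)$ by a comparison map $T$ that is isometric on each side, $1$-Lipschitz, and satisfies $d_Y(T(u),T(v)) \ge d(u,v)-4\delta$ (the tripod lemma), and then invokes the Busemann convexity of the tripod; you instead apply the four-point condition \eqref{eq:d-hyp} twice at the base $x$ to obtain $(\gamma(t)|\sigma(t))_x \ge \min\{(\gamma(t)|p)_x,\,(p|q)_x,\,(q|\sigma(t))_x\}-2\delta$, use the identities $(\gamma(t)|p)_x=t\,d(x,p)$ and $(q|\sigma(t))_x=t\,d(x,q)$ valid for points on minimal geodesics, and close with a three-case analysis; I checked that each case does reduce to an instance of the triangle inequality, so the bound $t\,d(p,q)+4\delta$ is indeed obtained. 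Your route is more elementary and self-contained, needing nothing beyond Definition~\ref{df:Ghyp}, whereas the paper's tripod comparison outsources the $4\delta$ distortion bound to a cited lemma but is more geometric and reusable, since the single map $T$ controls all pairs of points on the triangle simultaneously; the resulting constants are identical.
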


\begin{proof}
Let $\xi :[0,1] \lra X$ and $\zeta:[0,1] \lra X$ be minimal geodesics
from $x$ to $q$ and from $p$ to $q$, respectively.
Denote by $\triangle xpq$ the triangle formed by (the image of) $\gamma,\xi$ and $\zeta$.
We can construct a map $T:\triangle xpq \lra Y$ to a tripod $(Y,d_Y)$
with three edges of lengths $(p|q)_x$, $(x|q)_p$ and $(x|p)_q$ from the branching point
such that the restrictions $T|_{\gamma}$, $T|_{\xi}$ and $T|_{\zeta}$ are isometric
(see Figure~\ref{fig:tripod}, where $T(a)=T(b)=T(c)$ is the branching point $O$ of the tripod).
We set $\tilde{x}:=T(x)$, $\tilde{p}:=T(p)$ and $\tilde{q}:=T(q)$.
Then $T$ is $1$-Lipschitz (non-expanding) and
\[ d_Y\big( T(u),T(v) \big) \ge d(u,v) -4\delta \]
holds for all $u,v \in \triangle xpq$ by the triangle inequality and the tripod lemma
(see, for instance, \cite[2.15]{Va}, \cite[Lemma~2.3]{Ohyp}).
Together with the Busemann NPC of $(Y,d_Y)$, we deduce that
\[ d\big( \gamma(t),\xi(t) \big)
 \le d_Y\big( T\big( \gamma(t) \big),T\big( \xi(t) \big) \big) +4\delta
 \le td_Y \big( \tilde{p},\tilde{q} \big) +4\delta
 =td(p,q) +4\delta. \]
We similarly obtain $d(\xi(t),\eta(t)) \le (1-t)d(x,y) +4\delta$,
and hence the triangle inequality yields \eqref{eq:Buse}.
\end{proof}

\begin{figure}
\centering
\begin{picture}(380,130)

\put(210,70){$T$}
\put(204,58){$\lra$}

\put(78,67){\rule{2pt}{2pt}}
\put(102,19){\rule{2pt}{2pt}}
\put(127,62){\rule{2pt}{2pt}}

\put(132,66){$a$}
\put(100,5){$b$}
\put(70,72){$c$}

\thicklines
\qbezier(12,10)(92,30)(175,10)
\qbezier(12,10)(62,40)(112,110)
\qbezier(112,110)(122,50)(175,10)

\put(0,5){$x$}
\put(110,119){$p$}
\put(181,6){$q$}

\put(65,94){$(x|q)_p$}
\put(24,47){$(p|q)_x$}


\put(250,10){\line(1,1){50}}
\put(300,60){\line(0,1){50}}
\put(300,60){\line(1,0){60}}

\put(285,62){$O$}
\put(299,59){\rule{2pt}{2pt}}

\put(238,3){$\tilde{x}$}
\put(298,119){$\tilde{p}$}
\put(366,58){$\tilde{q}$}

\put(278,24){$(p|q)_x$}
\put(305,95){$(x|q)_p$}
\put(330,45){$(x|p)_q$}

\end{picture}
\caption{A $1$-Lipschitz map from a triangle to a tripod}\label{fig:tripod}
\end{figure}
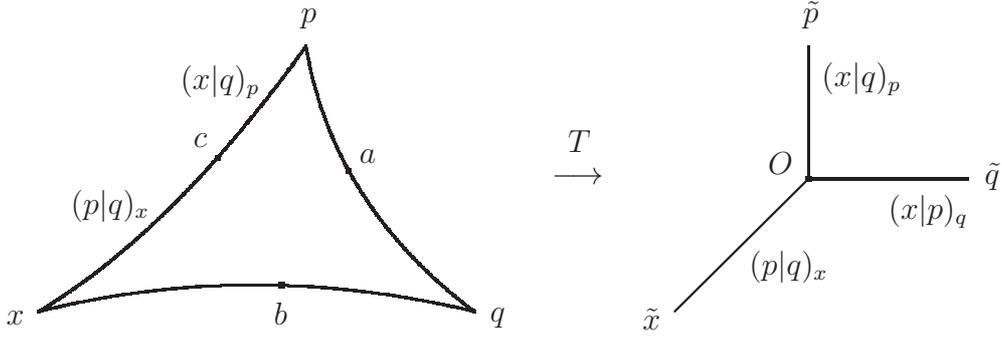

We remark that, similarly to \eqref{eq:CAT0t} in Lemma~\ref{lm:CAT0t},
the inequality \eqref{eq:Buse} does not give a meaningful estimate for $t$ close to $0$ or $1$.
One can use a $1$-Lipschitz map to a tripod also for showing a variant of the $\CAT(0)$-inequality,
whereas then the additional term seems to be necessarily dependent on the size of a triangle
(as in \eqref{eq:CAT0t}), since we take the square of the distance.

Now, let $\sd$ be the $L^2$-distance function on $X \times X \times \R$, namely
\[ \sd\big( (x,y,r),(p,q,s) \big) :=\sqrt{d^2(x,p) +d^2(y,q) +(r-s)^2}. \]
The following subset will play a role:
\[ A :=\{ (x,y,r) \in X \times X \times \R \,|\, d(x,y) \le r \}. \]
If $(X,d)$ is a $\CAT(0)$-space, then $A$ is a (geodesically) convex set thanks to the Busemann NPC.

We will make use of the nearest point projection to $A$ to prove the Wasserstein contraction property.
We remark that $(X \times X \times \R,\sd)$ is not a Gromov hyperbolic space
(it is a $\CAT(0)$-space if so is $(X,d)$), thereby
the contraction property of projection maps as in \cite[Lemma~7.3.D]{Gr} does not directly apply.

\begin{lemma}\label{lm:dA}
For $(x,y,r) \in (X \times X \times [0,\infty)) \setminus A$, we have
\[ \sd\big( (x,y,r),A \big) =\frac{d(x,y)-r}{\sqrt{3}}. \]
\end{lemma}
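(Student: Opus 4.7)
The plan is to prove matching upper and lower bounds equal to $R/\sqrt{3}$, where $R := d(x,y) - r > 0$; remarkably, no use of the $\delta$-hyperbolicity is needed, only the triangle inequality and the existence of a minimal geodesic joining $x$ to $y$.

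First I would establish the lower bound. For any candidate $(p,q,s) \in A$, introduce $a := d(x,p)$, $b := d(y,q)$ and $c := r - s$. The triangle inequality along the path $x \to p \to q \to y$ together with $d(p,q) \le s$ gives the linear constraint $a + b - c \ge R$. Since $\sd^2((x,y,r),(p,q,s)) = a^2 + b^2 + c^2$, a one-line Cauchy--Schwarz estimate against the vector $(1,1,-1) \in \R^3$ converts this into
\[
R^2 \le (a+b-c)^2 \le 3(a^2+b^2+c^2) = 3\,\sd^2\big((x,y,r),(p,q,s)\big),
\]
so $\sd((x,y,r),A) \ge R/\sqrt{3}$.

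Next I would show this bound is attained. The extremizer of the constrained quadratic problem from the previous step is $a = b = -c = R/3$, so I would simply search for $(p,q,s)$ realizing these values. Taking $p$ and $q$ on a minimal geodesic $\gamma:[0,1] \lra X$ from $x$ to $y$ with $d(x,p) = d(y,q) = R/3$ forces $d(p,q) = d(x,y) - 2R/3 = r + R/3$, and choosing $s := r + R/3$ then places $(p,q,s)$ on the boundary of $A$ and yields $\sd^2((x,y,r),(p,q,s)) = 3(R/3)^2 = R^2/3$. Combining with the previous step gives equality.

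I do not anticipate a genuine obstacle: the whole argument reduces to a three-variable constrained quadratic minimization, and the extremal configuration happens to be realized by two points on a common geodesic. The only minor book-keeping is to verify that the geodesic parameters $R/(3D)$ and $1 - R/(3D)$ lie in $[0,1]$ with the former no larger than the latter, which amounts to $R \le 3D/2$ for $D := d(x,y)$, and this is immediate from $r \ge 0$.
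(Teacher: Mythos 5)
Your proof is correct and follows essentially the same route as the paper: the lower bound is the triangle inequality combined with the $\ell^1$--$\ell^2$ comparison (Cauchy--Schwarz), and the upper bound uses exactly the same points $p,q$ at distance $(d(x,y)-r)/3$ from $x$ and $y$ on a minimal geodesic, with $s=r+(d(x,y)-r)/3$. The only (cosmetic) difference is that by applying Cauchy--Schwarz with the signed vector $(1,1,-1)$ you avoid the paper's preliminary reduction to the case $s\ge r$.
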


\begin{proof}
We first show $\sd((x,y,r),A) \ge (d(x,y)-r)/\sqrt{3}$.
Given $(p,q,s) \in A$, if $s<r$, then $(p,q,r) \in A$ and
\[ \sd\big( (x,y,r),(p,q,r) \big) < \sd\big( (x,y,r),(p,q,s) \big). \]
Hence, to show $\sd((x,y,r),(p,q,s)) \ge (d(x,y)-r)/\sqrt{3}$,
we can assume $s \ge r$ without loss of generality
(otherwise we replace $(p,q,s)$ with $(p,q,r)$).
Then we have, together with the triangle inequality,
\begin{align*}
\sd\big( (x,y,r),(p,q,s) \big)
&= \sqrt{d^2(x,p) +d^2(y,q) +(r-s)^2} \\
&\ge \frac{1}{\sqrt{3}} \big\{ d(x,p) +d(y,q) +(s-r) \big\} \\
&\ge \frac{1}{\sqrt{3}} \big\{ d(x,y) -d(p,q) +(s-r) \big\} \\
&\ge \frac{1}{\sqrt{3}} \big\{ d(x,y) -r \}.
\end{align*}
This implies $\sd((x,y,r),A) \ge (d(x,y)-r)/\sqrt{3}$.

To see the reverse inequality, 
we consider a minimal geodesic $\gamma:[0,1] \lra X$ from $x$ to $y$
and put
\[ (p,q,s) :=\bigg( \gamma\bigg( \frac{\lambda}{d(x,y)} \bigg),
 \gamma\bigg( 1-\frac{\lambda}{d(x,y)} \bigg), r+\lambda \bigg),
 \quad \lambda :=\frac{d(x,y)-r}{3} \in \bigg( 0,\frac{d(x,y)}{3} \bigg]. \]
Then $(p,q,s) \in A$ since
\[ d(p,q) =d(x,y) -2\lambda =\frac{d(x,y)+2r}{3} =s, \]
and we have $\sd((x,y,r),(p,q,s)) =\sqrt{3}\lambda$.
This completes the proof.
\end{proof}

The next lemma is an essential step for our contraction result.

\begin{lemma}\label{lm:Wcont}
For $(x,y,r) \in (X \times X \times [0,\infty)) \setminus A$ such that
\begin{equation}\label{eq:ass}
\sd\big( (x,y,r),A \big) \ge \frac{8}{\sqrt{3}}\delta,
\end{equation}
let $(\tilde{x},\tilde{y},\tilde{r}) \in A$ be a point attaining $\sd((x,y,r),A)$
given as in the proof of Lemma~$\ref{lm:dA}$.
Then, for any $(p,q,d(p,q)) \in A$, we have
\begin{align*}
&\sd^2\big( (\tilde{x},\tilde{y},\tilde{r}),(p,q,d(p,q)) \big) \\
&\le \sd^2\big( (x,y,r),(p,q,d(p,q)) \big)
 -\sd^2\big( (x,y,r),A \big) +18D_1 \sqrt{D_1 +\delta} \cdot \sqrt{\delta},
\end{align*}
where we set
\[ D_1=D_1(x,y,p,q) :=\diam\big( (\{x,y,p,q\},d) \big). \]
\end{lemma}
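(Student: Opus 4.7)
The plan is to expand $\sd^2(\tilde{x},\tilde{y},\tilde{r}),(p,q,d(p,q))$ coordinate-wise, apply Lemma~\ref{lm:CAT0t} to each of the two $X$-components, and combine with a direct expansion of the $\R$-component. After algebraic simplification, the inequality will reduce to bounding a single ``cross'' term of Cauchy--Schwarz type, where the $\delta$-hyperbolicity must finally be invoked.

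Write $a:=d(x,y)$, $b:=d(p,q)$, $Q:=(p,q,d(p,q))$ and $s:=\lambda/a$, so that $\tilde{x}=\gamma(s)$, $\tilde{y}=\gamma(1-s)$ and $\tilde{r}=r+\lambda$, where $\gamma:[0,1]\lra X$ is the minimal geodesic from $x$ to $y$ used in Lemma~\ref{lm:dA}. Applying Lemma~\ref{lm:CAT0t} with $t=s$, $z=p$ to bound $d^2(\tilde{x},p)$ and with $t=1-s$, $z=q$ to bound $d^2(\tilde{y},q)$, summing, and bounding each $d(z,x)\vee d(z,y)$ factor by $D_1$, gives
\begin{align*}
 d^2(\tilde{x},p)+d^2(\tilde{y},q) &\le d^2(x,p)+d^2(y,q) \\
 &\quad + s\bigl[d^2(y,p)-d^2(x,p)+d^2(x,q)-d^2(y,q)\bigr] \\
 &\quad - 2s(1-s)a^2 + 8\delta D_1 + 8\delta^2.
\end{align*}
Expanding $(\tilde{r}-b)^2=(r-b)^2+2\lambda(r-b)+\lambda^2$ and using $\lambda=sa=(a-r)/3$ (equivalently $6sa^2=2a(a-r)$), the $r$-linear contributions telescope and one arrives at
\[
 \sd^2\bigl((\tilde{x},\tilde{y},\tilde{r}),Q\bigr)-\sd^2\bigl((x,y,r),Q\bigr)+\sd^2\bigl((x,y,r),A\bigr) \le sB + 8\delta D_1 + 8\delta^2,
\]
where $B := d^2(y,p)-d^2(x,p)+d^2(x,q)-d^2(y,q)-2ab$ and $\sd^2((x,y,r),A)=3\lambda^2$ by Lemma~\ref{lm:dA}.

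It remains to bound $sB$ by the target error. In a $\CAT(0)$-space, $B\le 0$ by Cauchy--Schwarz (equivalently Ptolemy's inequality) in the comparison plane, which is exactly the contraction of the nearest-point projection onto a convex set. In the $\delta$-hyperbolic setting this must be replaced by an approximate Ptolemy inequality stemming from the Gromov four-point condition: among the three sums
\[
 \sigma_1=d(x,y)+d(p,q),\quad \sigma_2=d(x,p)+d(y,q),\quad \sigma_3=d(x,q)+d(y,p),
\]
the largest two differ by at most $2\delta$. Squaring $\sigma_3\le\max\{\sigma_1,\sigma_2\}+2\delta$ yields $d^2(y,p)+d^2(x,q)\le\max\{\sigma_1,\sigma_2\}^2+O(\delta D_1)$, and a case analysis on which of $\sigma_1,\sigma_2$ dominates, followed by extracting a square root via $\sqrt{X+Y}\le\sqrt{X}+\sqrt{Y}$, produces a bound $sB\le O\bigl(D_1\sqrt{(D_1+\delta)\delta}\bigr)$ matching the shape of the error in the statement.

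The main obstacle is this last step. Steps above are a routine coordinate computation once the projection formula of Lemma~\ref{lm:dA} is set up, but converting the $\delta$-additive slack in the four-point condition into a first-order $\sqrt\delta$-estimate of the precise form $18D_1\sqrt{D_1+\delta}\cdot\sqrt\delta$ requires careful bookkeeping of constants (this is where the factor $18$ originates), and uses the hypothesis \eqref{eq:ass} (equivalently $\lambda\ge\tfrac{8}{3}\delta$) to keep $\lambda$ above the hyperbolicity scale so that the $O(\delta D_1)$ and $O(\delta^2)$ error terms absorb cleanly into the $\sqrt\delta$-form.
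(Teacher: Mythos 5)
Your coordinate-wise reduction is correct and is a genuinely different route from the paper's. I checked the algebra: with $s=\lambda/a$, $\lambda=(a-r)/3$, summing the two applications of Lemma~\ref{lm:CAT0t} and expanding $(\tilde{r}-b)^2$ does give $\sd^2((\tilde{x},\tilde{y},\tilde{r}),Q)-\sd^2((x,y,r),Q)+\sd^2((x,y,r),A)\le sB+8\delta D_1+8\delta^2$ with $B=d^2(y,p)-d^2(x,p)+d^2(x,q)-d^2(y,q)-2ab$. The paper never computes the projection explicitly; instead it moves along the product geodesic from $(\tilde{x},\tilde{y},\tilde{r})$ toward $(p,q,d(p,q))$, shows via the approximate Busemann convexity (Lemma~\ref{lm:Buse}) that this geodesic stays within $8\delta/\sqrt{3}$ of $A$, plays the resulting lower bound (minimality of the projection, which is where \eqref{eq:ass} enters) against the upper bound from Lemma~\ref{lm:CAT0t}, and optimizes $t=\sqrt{\delta/(D_1+\delta)}$; that optimization is the source of the $\sqrt{\delta}$ and of the factor $18$. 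In your route \eqref{eq:ass} is only needed to force $D_1\ge 8\delta$ so that $\delta^2$-terms are absorbed.

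However, the step you yourself flag as the main obstacle is a genuine gap, and the mechanism you sketch for it fails. What you need is an approximate quasilinearization (Berg--Nikolaev) Cauchy--Schwarz inequality $d^2(y,p)+d^2(x,q)-d^2(x,p)-d^2(y,q)\le 2d(x,y)d(p,q)+O(\delta D_1)$ (this is not Ptolemy's inequality). Writing $u=d(y,p)$, $v=d(x,q)$, $U=d(x,p)$, $V=d(y,q)$: bounding $u^2+v^2$ by $\sigma_3^2\le(\max\{\sigma_1,\sigma_2\}+2\delta)^2$ discards the cross term $2uv$, and in the case where $\sigma_2$ dominates it only yields $B\le 2UV-2ab+O(\delta D_1)$, which is of order $D_1^2$ when $x$ is near $y$, $p$ is near $q$, and the two clusters are far apart --- exactly a configuration with $\sigma_2$ dominant. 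The case $u+v\le U+V+2\delta$ can in fact be handled by a first-order (not squared) argument: $B+2ab=(u-U)(u+U)+(v-V)(v+V)=(u-U)\{(u-V)+(U-v)\}+\{(u-U)+(v-V)\}(v+V)\le 2ab+2\delta(v+V)$, so $B\le 4\delta D_1$ there; but the remaining case ($d(x,y)+d(p,q)$ dominant) does not follow from such manipulations or from squaring, and is most easily settled by the standard four-point tree approximation (additive error of order $\delta$) combined with the exact quadruple inequality in trees. As written, the decisive estimate is asserted rather than proved. If you supply it, your argument not only proves the lemma but sharpens the error to $O(\delta D_1)$ in place of $18D_1\sqrt{D_1+\delta}\sqrt{\delta}$, in the spirit of Remark~\ref{rm:final}(b).
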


Observe that, if $\delta=0$, then the assumption \eqref{eq:ass} is void
and the assertion shows that $\sd((\tilde{x},\tilde{y},\tilde{r}),(p,q,d(p,q)))$
is strictly smaller than $\sd( (x,y,r),(p,q,d(p,q)))$.

\begin{proof}
Put $s:=d(p,q)$ for brevity, and
let $\gamma:[0,1] \lra X$ and $\eta:[0,1] \lra X$ be minimal geodesics
from $\tilde{x}$ to $p$ and from $\tilde{y}$ to $q$, respectively.
Note that Lemma~\ref{lm:Buse} yields
\[ d\big( \gamma(t),\eta(t) \big)
 \le (1-t)d(\tilde{x},\tilde{y}) +td(p,q) +8\delta \]
for any $t \in [0,1]$.
Then we deduce from Lemma~\ref{lm:dA} that
\begin{align*}
&\sd \Big( \big( \gamma(t),\eta(t),(1-t)\tilde{r}+ts \big),A \Big) \\
&\le \frac{1}{\sqrt{3}} \Big\{ (1-t)\big( d(\tilde{x},\tilde{y})-\tilde{r} \big) +t\big( d(p,q)-s \big) \Big\}
 +\frac{8\delta}{\sqrt{3}} \\
&= \frac{8}{\sqrt{3}}\delta.
\end{align*}
Therefore, on the one hand,
\begin{align*}
&\sd\Big( (x,y,r), \big( \gamma(t),\eta(t),(1-t)\tilde{r}+ts \big) \Big) \\
&\ge \sd\big( (x,y,r),A \big)
 -\sd\Big( \big( \gamma(t),\eta(t),(1-t)\tilde{r}+ts \big),A \Big) \\
&\ge \sd\big( (x,y,r),A \big) -\frac{8}{\sqrt{3}}\delta.
\end{align*}
This implies that, since the right-hand side is nonnegative by the assumption \eqref{eq:ass}
and $\sd((x,y,r),A) \le D_1/\sqrt{3}$ by Lemma~\ref{lm:dA} and $r \ge 0$,
\begin{equation}\label{eq:dA>}
\sd^2 \Big( (x,y,r), \big( \gamma(t),\eta(t),(1-t)\tilde{r}+ts \big) \Big)
 \ge \sd^2 \big( (x,y,r),A \big) -\frac{16}{3}D_1 \delta +\frac{64}{3}\delta^2. 
\end{equation}

On the other hand, observe from \eqref{eq:CAT0t} that
\begin{align*}
d^2\big( x,\gamma(t) \big)
&\le (1-t)d^2(x,\tilde{x}) +td^2(x,p) -(1-t)td^2(\tilde{x},p) +4D_1 \delta +4\delta^2, \\
d^2\big( y,\eta(t) \big)
&\le (1-t)d^2(y,\tilde{y}) +td^2(y,q) -(1-t)td^2(\tilde{y},q) +4D_1 \delta +4\delta^2
\end{align*}
(recall that $\tilde{x}$ and $\tilde{y}$ are on a minimal geodesic between $x$ and $y$).
Summing up, we obtain
\begin{align*}
&\sd^2\Big( (x,y,r), \big( \gamma(t),\eta(t),(1-t)\tilde{r}+ts \big) \Big) \\
&\le (1-t)\sd^2\big( (x,y,r),(\tilde{x},\tilde{y},\tilde{r}) \big)
 +t\sd^2\big( (x,y,r),(p,q,s) \big) \\
&\quad -(1-t)t\sd^2\big( (\tilde{x},\tilde{y},\tilde{r}),(p,q,s) \big)
 +8D_1 \delta +8\delta^2.
\end{align*}
Combining this with \eqref{eq:dA>} shows
\begin{align*}
&t\sd^2\big( (x,y,r),(p,q,s) \big) -(1-t)t\sd^2\big( (\tilde{x},\tilde{y},\tilde{r}),(p,q,s) \big) \\
&\ge t\sd^2\big( (x,y,r),A \big) -\frac{40}{3}D_1 \delta +\frac{40}{3}\delta^2.
\end{align*}
By rearrangement, we find
\begin{align*}
\sd^2\big( (\tilde{x},\tilde{y},\tilde{r}),(p,q,s) \big)
&\le \sd^2\big( (x,y,r),(p,q,s) \big)
 -\sd^2\big( (x,y,r),A \big) \\
&\quad +t\sd^2\big( (\tilde{x},\tilde{y},\tilde{r}),(p,q,s) \big)
 +\frac{40}{3}\frac{D_1 \delta}{t}.
\end{align*}
Moreover, it follows from $d(x,\tilde{x}) =d(y,\tilde{y}) \le d(x,y)/3$,
$\tilde{r} =d(\tilde{x},\tilde{y}) <d(x,y)$ and $s=d(p,q)$ that
\[ \sd^2 \big( (\tilde{x},\tilde{y},\tilde{r}),(p,q,s) \big)
 \le \frac{16}{9}D_1^2 +\frac{16}{9}D_1^2 +D_1^2
 =\frac{41}{9}D_1^2. \]
Finally, letting $t =\sqrt{\delta/(D_1+\delta)} \in (0,1)$, we obtain
\begin{align*}
&\sd^2\big( (\tilde{x},\tilde{y},\tilde{r}),(p,q,s) \big) \\
&\le \sd^2\big( (x,y,r),(p,q,s) \big) -\sd^2\big( (x,y,r),A \big)
 +\bigg( \frac{41D_1^2}{9\sqrt{D_1 +\delta}} +\frac{40}{3}D_1 \sqrt{D_1 +\delta} \bigg) \sqrt{\delta} \\
&\le \sd^2\big( (x,y,r),(p,q,s) \big) -\sd^2\big( (x,y,r),A \big)
 +18D_1 \sqrt{D_1 +\delta} \cdot \sqrt{\delta}.
\end{align*}
\end{proof}

We remark that, as is natural from the local flexibility of scale $\le \delta$,
the following contraction property is nontrivial
only when $\delta$ is sufficiently small ($\delta \ll D_2$),
and then we have $d(x,y) \le W_1(\mu,\nu) +O(\delta^{1/4})$ for $\ve_1 =\ve_2 =0$.

\begin{theorem}[Wasserstein contraction]\label{th:Wcont}
For any $\mu,\nu \in \cP^1(X)$, $x \in \cB(\mu,\ve_1)$ and $y \in \cB(\nu,\ve_2)$
with $\ve_1,\ve_2 \ge 0$, we have
\[ d(x,y) \le W_1(\mu,\nu)
 +8\delta \vee \sqrt{54D_2 \sqrt{D_2 +\delta} \sqrt{\delta} +3(\ve_1 +\ve_2)}, \]
where
\[ D_2 =D_2(x,y,\mu,\nu) :=\diam\big( (\{x,y\} \cup \supp\,\mu \cup \supp\,\nu,d) \big). \]
\end{theorem}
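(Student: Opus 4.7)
The strategy is to transport the comparison estimate of Lemma \ref{lm:Wcont} into Wasserstein language by integrating against an $L^1$-optimal coupling, and then to exploit the near-minimizing property of $x$ and $y$ via the definition of barycentric sets. Set $M:=d(x,y)$, $\Delta:=M-W_1(\mu,\nu)$, and assume $\Delta>0$ and $M>8\delta$; otherwise the claimed bound is trivial.

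For a parameter $r\in[0,M)$, put $\lambda:=(M-r)/3>0$, so that Lemma \ref{lm:dA} gives $\sd((x,y,r),A)=\sqrt{3}\lambda$. Provided $\lambda\ge 8\delta/3$, Lemma \ref{lm:Wcont} produces a projection $(\tilde x,\tilde y,\tilde r)\in A$ with $\tilde r=r+\lambda$ such that, for every $(p,q)\in X\times X$,
\[
\sd^2\bigl((\tilde x,\tilde y,\tilde r),(p,q,d(p,q))\bigr)\le\sd^2\bigl((x,y,r),(p,q,d(p,q))\bigr)-3\lambda^2+18D_2\sqrt{D_2+\delta}\cdot\sqrt{\delta},
\]
where $D_1(x,y,p,q)\le D_2$ holds uniformly for $(p,q)\in\supp\mu\times\supp\nu$. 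Let $\pi$ be an $L^1$-optimal coupling of $(\mu,\nu)$ and integrate this inequality against $\pi$. On the first two coordinates, the definition of $\cB(\mu,\ve_1)$ and $\cB(\nu,\ve_2)$ yields $\int d^2(\tilde x,p)\,\mu(dp)\ge\int d^2(x,p)\,\mu(dp)-\ve_1$ and similarly for $y$; on the third coordinate, the cross-term simplifies to $\int[(\tilde r-d(p,q))^2-(r-d(p,q))^2]\,\pi(dp\,dq)=\lambda(2r+\lambda-2W_1(\mu,\nu))$ by optimality of $\pi$. Combining these with $r+3\lambda=M$ collapses the estimate to
\[
2\lambda(\Delta-\lambda)\le\ve_1+\ve_2+18D_2\sqrt{D_2+\delta}\cdot\sqrt{\delta}=:C.
\]

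It remains to optimize over $\lambda\in[8\delta/3,\,M/3]$, where the upper constraint comes from the requirement $r\ge 0$. If $W_1(\mu,\nu)\ge M/3$, the unconstrained optimum $\lambda=\Delta/2$ is admissible and yields $\Delta\le\sqrt{2C}$. If instead $W_1(\mu,\nu)<M/3$, one is forced to choose $\lambda=M/3$, which, writing $W_1=sM$ with $s<1/3$, gives $(4-6s)M^2\le 9C$ and hence $\Delta^2=(1-s)^2M^2\le 9(1-s)^2C/(4-6s)\le 3C$, the last inequality being elementary since $s<1/3<1/\sqrt{3}$. In either case $\Delta\le\sqrt{3C}=\sqrt{54D_2\sqrt{D_2+\delta}\sqrt{\delta}+3(\ve_1+\ve_2)}$; the excluded regimes $\lambda<8\delta/3$ force $\Delta<8\delta$ directly. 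Combining yields $\Delta\le 8\delta\vee\sqrt{3C}$, which is the theorem.

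The main obstacle is not in the integration step but in the final optimization: the geometric constraint $r\ge 0$ prevents the unconstrained $\lambda=\Delta/2$ from always being feasible, and it is precisely the boundary case $\lambda=M/3$ that inflates the constant from the naive $\sqrt{2C}$ to $\sqrt{3C}$ and thus accounts for the factor $3$ (rather than $2$) in front of $\ve_1+\ve_2$ in the theorem. A secondary issue is verifying that the barycenter inequalities from $\cB(\mu,\ve_1)$, $\cB(\nu,\ve_2)$ combine cleanly with the integrated third-coordinate term so that the $L^1$-Wasserstein distance (and not an $L^2$ one) emerges on the right-hand side.
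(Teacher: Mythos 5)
Your proof is correct and takes essentially the same route as the paper's: both lift the problem to $(X\times X\times\R,\sd)$ with the set $A$, combine Lemmas \ref{lm:dA} and \ref{lm:Wcont}, integrate against a coupling of $(\mu,\nu)$, and use the near-minimality of $x$ and $y$; you merely keep the third coordinate $r$ as a free parameter and optimize over $\lambda$ under the constraints $8\delta/3\le\lambda\le d(x,y)/3$, whereas the paper fixes $r$ to be the coupling's transport cost and argues by contradiction via membership of $(x,y,r)$ in the barycentric set of the pushforward measure (your finer bookkeeping of the third-coordinate term actually yields a slightly better constant than the stated one, so the claim that the boundary case forces the factor $3$ is not quite accurate but harmless). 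The only small repair: since $X$ is not assumed complete or separable, an exact $L^1$-optimal coupling need not exist, so replace it by a coupling of cost at most $W_1(\mu,\nu)+\alpha$ and let $\alpha\to 0$ at the end, exactly as the paper does; your computation goes through verbatim with $W_1(\mu,\nu)+\alpha$ in place of $W_1(\mu,\nu)$.
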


\begin{proof}
For arbitrary $\alpha>0$,
let $\pi \in \cP(X \times X)$ be a coupling of $(\mu,\nu)$ with
\[ r:= \int_{X \times X} d(p,q) \,\pi(dp\,dq) \le W_1(\mu,\nu) +\alpha. \]
We define a map
\[ \Phi:X \times X \ni (p,q) \,\longmapsto\, \big( p,q,d(p,q) \big) \in X \times X \times \R, \]
and put $\Pi :=\Phi_* \pi$ (the push-forward of $\pi$ by $\Phi$).
Note that $\supp\,\Pi \subset \Phi(\supp\,\pi) \subset A$.
Since $x \in \cB(\mu,\ve_1)$ and $y \in \cB(\nu,\ve_2)$,
together with the choice of $r$, we find
\begin{align*}
&\int_{X \times X \times \R}
 \big\{ \sd^2 \big( (x,y,r),(p,q,s) \big) -d^2(x_0,p) -d^2(x_0,q) -s^2 \big\} \,\Pi(dp\,dq\,ds) \\
&= \int_X \big\{ d^2(x,p) -d^2(x_0,p) \big\} \,\mu(dp) +\int_X \big\{ d^2(y,q) -d^2(x_0,q) \big\} \,\nu(dq) \\
&\quad +\int_{X \times X} \big\{ \big( r-d(p,q) \big)^2 -d^2(p,q) \big\} \,\pi(dp\,dq) \\
&\le \inf_{(\bar{x},\bar{y},\bar{r}) \in X \times X \times \R}
 \int_{X \times X \times \R}
 \big\{ \sd^2\big( (\bar{x},\bar{y},\bar{r}),(p,q,s) \big) -d^2(x_0,p) -d^2(x_0,q) -s^2 \big\} \,\Pi(dp\,dq\,ds) \\
&\quad +\ve_1 +\ve_2.
\end{align*}
That is to say, $(x,y,r)$ lives in the barycentric set $\cB(\Pi,\ve_1 +\ve_2)$ in $(X \times X \times \R,\sd)$.

Then we deduce from Lemma~\ref{lm:Wcont} that $(x,y,r)$ necessarily satisfies
\[ \sd^2\big( (x,y,r),A \big) \le \frac{64}{3}\delta^2 \vee
 \bigg( 18D_2 \sqrt{D_2 +\delta} \sqrt{\delta} +\ve_1 +\ve_2 \bigg). \]
Indeed, if not, then \eqref{eq:ass} is fulfilled and, for any $(p,q) \in \supp\,\pi$,
$(\tilde{x},\tilde{y},\tilde{r}) \in A$ (given as in the proof of Lemma~\ref{lm:dA}) satisfies
\begin{align*}
&\sd^2\big( (\tilde{x},\tilde{y},\tilde{r}),(p,q,d(p,q)) \big) \\
&\le \sd^2\big( (x,y,r),(p,q,d(p,q)) \big) -\sd^2\big( (x,y,r),A \big)
 +18D_2 \sqrt{D_2 +\delta} \sqrt{\delta} \\
&< \sd^2\big( (x,y,r),(p,q,d(p,q)) \big) -(\ve_1 +\ve_2).
\end{align*}
By integrating in $(p,q)$ with respect to $\pi$,
we find a contradiction to $(x,y,r) \in \cB(\Pi,\ve_1 +\ve_2)$.

Hence, it follows from Lemma~\ref{lm:dA} that
\begin{align*}
d(x,y) -W_1(\mu,\nu) &\le d(x,y) -r +\alpha \\
&\le 8\delta \vee
 \sqrt{54D_2 \sqrt{D_2 +\delta} \sqrt{\delta} +3(\ve_1 +\ve_2)} +\alpha.
\end{align*}
Letting $\alpha \to 0$ completes the proof.
\end{proof}

\section{Deterministic approximations of barycenters}\label{sc:nodice}

We next discuss an approximation of barycenters of uniform distributions on finite points
by the gradient flow method.
Given a function $f:X \lra \R$,
we define the \emph{proximal operator} (also called the \emph{resolvent operator}) by
\begin{equation}\label{eq:sJ}
\sJ^f_{\tau}(x) :=\argmin_{y \in X} \bigg\{ f(y) +\frac{d^2(x,y)}{2\tau} \bigg\}
\end{equation}
for $x \in X$ and $\tau>0$ (that is, $y \in \sJ^f_{\tau}(x)$ if $y$ attains the above minimum).
Roughly speaking, an element in $\sJ^f_{\tau}(x)$ can be regarded as
an approximation of a point on the gradient curve of $f$ at time $\tau$ from $x$.
Thus the iteration of the proximal operator can be regarded as \emph{discrete-time gradient flow} for $f$,
which is expected to lead us to a minimizer of $f$.
We refer to \cite{Ohyp} for some contraction properties of discrete-time gradient flows
for geodesically convex functions on Gromov hyperbolic spaces.

We will apply the proximal operator only to squared distance functions $f=d^2(z,\cdot)$.
In this case, each $y \in \sJ^f_{\tau}(x)$ is explicitly given as $y=\gamma((2\tau +1)^{-1})$
for some minimal geodesic $\gamma:[0,1] \lra X$ from $z$ to $x$.

The next lemma (corresponding to \cite[Lemma~3.2]{Ba2}, \cite[(4.5)]{OP1}) provides a key estimate.
Precisely, those in \cite{Ba2,OP1} are concerned with $y \in \sJ^f_{\tau}(x)$ for a convex function $f$,
and we shall generalize it to Gromov hyperbolic spaces for the squared distance function
$f=d^2(z,\cdot)$.

\begin{lemma}[Key estimate]\label{lm:y<x}
For any $\tau>0$, $w,x,z \in X$ and $y=\gamma((2\tau+1)^{-1})$
on a minimal geodesic $\gamma:[0,1] \lra X$ from $z$ to $x$, we have
\begin{equation}\label{eq:y<x}
d^2(w,y) \le d^2(w,x) -2\tau\big\{ d^2(z,y)-d^2(z,w) \big\} +\Theta\tau\delta,
\end{equation}
where
\[ \Theta :=\big\{ 8d(z,w) +8\delta \big\} \vee
 \bigg[ \big\{ 4d(w,y)+8\tau d(z,y) \big\} \bigg( \frac{1}{\tau} \wedge \frac{2d(z,y)}{\delta} \bigg) \bigg]. \]
\end{lemma}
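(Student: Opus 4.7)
\medskip
\noindent\textit{Proof plan.}
The plan is to apply the $\delta$-hyperbolicity \eqref{eq:d-hyp} with base point $y$ to the three points $z,w,x$. Since $y$ lies on the minimal geodesic from $z$ to $x$, one has $(z|x)_y=0$, and so hyperbolicity yields
\[
(w|x)_y \wedge (z|w)_y \le \delta.
\]
I would split the argument into two cases according to which Gromov product is small, and in each case keep that Gromov product as an explicit parameter $\ve\le\delta$ rather than immediately substituting $\ve=\delta$. Setting $a:=d(w,y)$ and $b:=d(z,y)$, so that $d(y,x)=2\tau b$, the identities $d(w,x)=a+2\tau b-2(w|x)_y$ and $d(z,w)=a+b-2(z|w)_y$ are just the definition of the Gromov product.

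In the first case, $\ve:=(w|x)_y\le\delta$, I would square $d(w,x)=a+2\tau b-2\ve$ and combine it with the elementary estimate $d^2(z,y)-d^2(z,w)\le 2ab-a^2$ (a consequence of $d(z,w)\ge|b-a|$). After cancellation of the $4\tau ab$ cross terms, the resulting upper bound on the left-hand side of \eqref{eq:y<x} reduces to
\[
-4\tau^2 b^2-2\tau a^2 +4\ve(a+2\tau b)-4\ve^2 \le \ve(4a+8\tau b).
\]
The crux is then the elementary Gromov product bound $(w|x)_y\le d(y,w)\wedge d(y,x)=a\wedge 2\tau b$, which combined with $\ve\le\delta$ gives $\ve\le\delta\wedge 2\tau b$. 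Thus the bound becomes $(4d(w,y)+8\tau d(z,y))(\delta\wedge 2\tau d(z,y))$, which is exactly $\tau\delta$ times the second entry of the maximum defining $\Theta$. In the second case, $\ve':=(z|w)_y\le\delta$, I would run the symmetric argument -- using $d(w,x)\ge|a-2\tau b|$ in the form $d^2(w,y)-d^2(w,x)\le 4\tau ab-4\tau^2 b^2$ and squaring $d(z,w)=a+b-2\ve'$ -- to obtain the upper bound
\[
-4\tau^2 b^2-2\tau a^2+8\tau\ve'(a+b)-8\tau\ve'^2,
\]
and then use the identity $a+b=d(z,w)+2\ve'$ to rewrite this as $8\tau\ve' d(z,w)+8\tau\ve'^2\le 8\tau\delta d(z,w)+8\tau\delta^2$, which is $\tau\delta$ times the first entry of the maximum defining $\Theta$.

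The main subtlety, and essentially the only place where the proof is not routine squaring, is recognizing that one cannot simply replace $\ve$ by $\delta$ in Case~1: the crude estimate $(4a+8\tau b)\delta$ strictly exceeds $\Theta\tau\delta$ whenever $2\tau d(z,y)<\delta$, which is exactly the regime governed by the $2d(z,y)/\delta$ branch of the minimum in $\Theta$; and in that regime Case~2 need not apply at all, so the branch $8d(z,w)+8\delta$ is not automatically available. Keeping the Gromov product bound $\ve\le 2\tau d(z,y)$ is what salvages Case~1. Everything else reduces to expanding squares and discarding the non-positive remainders $-4\tau^2 b^2$, $-2\tau a^2$, $-4\ve^2$, $-8\tau\ve'^2$.
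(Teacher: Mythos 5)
Your proof is correct, and its skeleton coincides with the paper's: both arguments apply the $\delta$-hyperbolicity \eqref{eq:d-hyp} at the base point $y$, where $(z|x)_y=0$, split into the two cases $(z|w)_y\le\delta$ and $(w|x)_y\le\delta$ (which produce exactly the two branches of $\Theta$), and both exploit the bound $(w|x)_y\le d(x,y)=2\tau d(z,y)$ to obtain the minimum $\delta\wedge 2\tau d(z,y)$ in the second branch. The execution differs in two ways that make your version slightly more elementary. First, the paper opens by using the minimizing property of $y\in\sJ^{f}_{\tau}(x)$ with $f=d^2(z,\cdot)$ to settle the case $d(w,y)\le d(x,y)$, and needs the hypothesis $d(w,y)>d(x,y)$ later to justify squaring the lower bound $d(w,x)\ge d(w,y)+d(x,y)-2(\delta\wedge d(x,y))$; your argument needs neither the resolvent property nor this preliminary case, because you work with the exact identities $d(w,x)=d(w,y)+2\tau d(z,y)-2(w|x)_y$ and $d(z,w)=d(w,y)+d(z,y)-2(z|w)_y$ together with the always-valid reverse triangle inequalities $d(z,w)\ge|d(z,y)-d(w,y)|$ and $d(w,x)\ge|d(w,y)-2\tau d(z,y)|$, after which everything is expansion of squares and discarding nonpositive remainders. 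Second, in the case $(z|w)_y\le\delta$ the paper reaches the branch $8d(z,w)\delta+8\delta^2$ via $\{d(z,w)+2\delta\}^2\ge d^2(z,y)+2d(z,y)d(w,y)$ combined with $|d(w,y)-d(x,y)|\le d(w,x)$, while you reach it by rewriting $d(w,y)+d(z,y)=d(z,w)+2(z|w)_y$; the constants agree. So the route is the paper's in essence, but your purely metric bookkeeping shows the estimate uses only the position of $y$ on the geodesic (through $d(x,y)=2\tau d(z,y)$), not its variational characterization.
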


\begin{proof}
First of all, it follows from $y \in \sJ^f_{\tau}(x)$ with $f=d^2(z,\cdot)$ that
\[ d^2(x,y) \le d^2(x,w) -2\tau\big\{ d^2(z,y)-d^2(z,w) \big\}. \]
Hence, if $d(w,y) \le d(x,y)$, then \eqref{eq:y<x} holds
(without the additional term $\Theta\tau\delta$).

We assume $d(w,y)>d(x,y)$ in the sequel.
Note that the $\delta$-hyperbolicity implies
\[ 0 =(x|z)_y \ge (x|w)_y \wedge (z|w)_y -\delta. \]
If $(z|w)_y \le \delta$, then we find
\[ \big\{ d(z,w) +2\delta \big\}^2
 \ge \big\{ d(z,y)+d(w,y) \big\}^2
 \ge d^2(z,y) +2d(z,y) d(w,y). \]
Combining this with the triangle inequality $|d(w,y)-d(x,y)| \le d(w,x)$
and $d(x,y) =2\tau d(z,y)$, we obtain
\begin{align*}
d^2(w,y) &\le d^2(w,x) +2d(x,y) d(w,y) -d^2(x,y) \\
&\le d^2(w,x) +4\tau d(z,y) d(w,y)  \\
&\le d^2(w,x) +2\tau \big\{ d^2(z,w) -d^2(z,y) \big\}
 +2\tau \big\{ 4\delta d(z,w) +4\delta^2 \big\}.
\end{align*}

In the other case of $(x|w)_y \le \delta$,
together with the triangle inequality, we have
\[ d(x,y) +d(w,y) -d(w,x) \le (2\delta) \wedge \big( 2d(x,y) \big). \]
Thus we obtain
\[ d(w,x) \ge d(w,y) +d(x,y) -2\big( \delta \wedge d(x,y) \big), \]
and observe that the right-hand side is positive by the hypothesis $d(w,y)>d(x,y)$.
Therefore, we deduce that
\[ d^2(w,x) \ge d^2(w,y) +d^2(x,y) +2d(w,y) d(x,y)
 -4\big\{ d(w,y)+d(x,y) \big\} \big( \delta \wedge d(x,y) \big). \]
Substituting $d(x,y)=2\tau d(z,y)$ and $d(w,y) \ge d(z,y)-d(z,w)$ yields
\begin{align*}
d^2(w,x) &\ge d^2(w,y) +4\tau(\tau +1)d^2(z,y) -4\tau d(z,w) d(z,y) \\
&\quad -4\big\{ d(w,y)+d(x,y) \big\} \big( \delta \wedge 2\tau d(z,y) \big).
\end{align*}
Then we apply the elementary inequality
$2(\tau +1)a^2 -2ab \ge a^2 -b^2$ with $a=d(z,y)$ and $b=d(z,w)$ to see
\[ d^2(w,x) \ge d^2(w,y) +2\tau \big\{ d^2(z,y) -d^2(z,w) \big\}
 -4\big\{ d(w,y)+d(x,y) \big\} \big( \delta \wedge 2\tau d(z,y) \big). \]
This completes the proof.
\end{proof}

In the $\CAT(0)$-case (see \cite{Ba2,OP1}), we have
\[ d^2(w,y) \le d^2(w,x) -2\tau\big\{ d^2(z,y)-d^2(z,w) \big\} \]
without the additional term $\Theta\tau\delta$.
Note that $\Theta\tau\delta$ in \eqref{eq:y<x} tends to $0$
not only as $\delta \to 0$ but also as $\tau \to 0$.
This is the natural behavior since $y$ tends to $x$ as $\tau \to 0$.

We also remark that, in the proof of \cite[(4.5)]{OP1},
the Riemannian nature of $\CAT(0)$-spaces plays an essential role.
Precisely, the inequality \cite[(4,2)]{OP1} is a consequence of
a Riemannian property called the \emph{commutativity} as in \cite[(1.2), (3.1)]{OP2}.

By using Lemma~\ref{lm:y<x}, we establish the following deterministic approximation of barycenters
by the iterative application of the proximal operator in the spirit of \cite{Ho,LP}
(so-called the \emph{no dice} theorem).
We also refer to \cite[Theorem~3.4]{Ba2} and \cite[Theorem~5.5]{OP1}
for generalizations to the sum of convex functions.


\begin{theorem}[Deterministic approximation]\label{th:nodice}
Fix a finite sequence $(z_i)_{i=1}^n$ in $X$, put $f_i:=d^2(z_i,\cdot)$,
and let $p \in X$ be a minimizer of the function $f:=\sum_{i=1}^n f_i$.
Given $\tau>0$ and an arbitrary initial point $y_0 \in X$, we recursively choose
\[ y_{kn+i} \in \sJ^{f_i}_{\tau}(y_{kn+i-1})
 \quad\ \text{for}\,\ k \ge 0,\ 1 \le i \le n, \]
and assume that $p$, $(z_i)$ and $(y_{kn+i})$ are all included in a bounded set $\Omega \subset X$.
Then, for any $\ve>0$, there exists some $k_0 <d^2(p,y_0)/(2\tau\ve)$ such that
\begin{equation}\label{eq:nod-1}
f(y_{k_0 n}) \le f(p) +\frac{n\Theta_{\Omega} \delta}{2} +2n(n+1)D_{\Omega}^2 \tau +\ve,
\end{equation}
where we set $D_{\Omega}:=\diam(\Omega)$ and
\[ \Theta_{\Omega} :=(8D_{\Omega} +8\delta) \vee
 \bigg[ (4+8\tau)D_{\Omega} \bigg( \frac{1}{\tau} \wedge \frac{2D_{\Omega}}{\delta} \bigg) \bigg]. \]
Moreover, we have
\begin{equation}\label{eq:nod-2}
d(p,y_{k_0 n}) \le \sqrt{ (16D_{\Omega} +\Theta_{\Omega})\delta +16\delta^2
 +4(n+1)D_{\Omega}^2 \tau +\frac{2\ve}{n}}.
\end{equation}
\end{theorem}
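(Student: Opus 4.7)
The plan is to iterate Lemma \ref{lm:y<x} across a single cyclic sweep $y_{kn} \mapsto y_{(k+1)n}$ with $p$ playing the role of $w$, to telescope the resulting one-step inequality across many sweeps so as to extract some $k_0$ for which $f(y_{k_0 n})$ is close to $f(p)$, and finally to convert this $f$-value bound into a distance bound via Proposition \ref{pr:bary} applied to the empirical measure $\mu := n^{-1}\sum_{i=1}^n \delta_{z_i}$.

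First I would apply Lemma \ref{lm:y<x} with $w = p$, $z = z_i$, $x = y_{kn+i-1}$ and $y = y_{kn+i}$; since all of these points lie in $\Omega$ by hypothesis, the quantity $\Theta$ from that lemma is majorized by $\Theta_\Omega$, so that
\begin{equation*}
d^2(p, y_{kn+i}) \le d^2(p, y_{kn+i-1}) - 2\tau\bigl\{ d^2(z_i, y_{kn+i}) - d^2(z_i, p) \bigr\} + \Theta_\Omega \tau\delta.
\end{equation*}
Summing over $i = 1, \dots, n$ produces the single-sweep estimate
\begin{equation*}
d^2(p, y_{(k+1)n}) \le d^2(p, y_{kn}) - 2\tau \sum_{i=1}^n \bigl\{ d^2(z_i, y_{kn+i}) - d^2(z_i, p) \bigr\} + n\Theta_\Omega \tau\delta.
\end{equation*}

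To replace $\sum_i d^2(z_i, y_{kn+i})$ by $f(y_{kn})$, I would use the explicit form $y_{kn+j} = \gamma((2\tau+1)^{-1})$ of the proximal point, which gives $d(y_{kn+j-1}, y_{kn+j}) = 2\tau d(z_j, y_{kn+j}) \le 2\tau D_\Omega$; the triangle inequality then yields $d(y_{kn}, y_{kn+i}) \le 2i\tau D_\Omega$, and hence $|d^2(z_i, y_{kn}) - d^2(z_i, y_{kn+i})| \le 4i\tau D_\Omega^2$ via $a^2 - b^2 = (a+b)(a-b)$ with $a+b \le 2D_\Omega$. Summing these index-dependent errors contributes exactly $2n(n+1)\tau D_\Omega^2$, so that
\begin{equation*}
d^2(p, y_{(k+1)n}) \le d^2(p, y_{kn}) - 2\tau\bigl\{ f(y_{kn}) - f(p) \bigr\} + 4n(n+1)\tau^2 D_\Omega^2 + n\Theta_\Omega \tau\delta.
\end{equation*}
Telescoping this inequality for $k = 0, \dots, K-1$ with $K := \lceil d^2(p, y_0)/(2\tau\ve) \rceil$, using $d^2(p, y_{Kn}) \ge 0$, and dividing by $2K\tau$ delivers an upper bound on the average of $f(y_{kn}) - f(p)$; picking $k_0 \in \{0,\dots,K-1\}$ minimizing $f(y_{kn})$ then yields \eqref{eq:nod-1}, with $k_0 \le K-1 < d^2(p, y_0)/(2\tau\ve)$ as required.

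For \eqref{eq:nod-2} I would note that $p$ is a barycenter of $\mu$ in the sense $p \in \cB(\mu, 0)$, while $y_{k_0 n} \in \cB(\mu, \ve_1)$ with $\ve_1 := (f(y_{k_0 n}) - f(p))/n$. The diameter estimate \eqref{eq:vineq} of Proposition \ref{pr:bary}, combined with $W_1(\delta_p, \mu), W_1(\delta_{y_{k_0 n}}, \mu) \le D_\Omega$, then delivers a bound on $d(p, y_{k_0 n})$ into which \eqref{eq:nod-1} can be substituted to produce \eqref{eq:nod-2}. The main technical obstacle I anticipate is the drift estimate in the third paragraph: each of the $n$ sub-steps in a sweep shifts the iterate by up to $2\tau D_\Omega$ and these displacements compound, producing the quadratic-in-$n$ error $2n(n+1)D_\Omega^2 \tau$; a secondary subtlety is verifying that $\Theta_\Omega$ is a valid uniform majorant of $\Theta$ along the entire trajectory, which requires checking both branches of the ``$\vee$'' (in particular the hybrid factor $1/\tau \wedge 2D_\Omega/\delta$) in Lemma \ref{lm:y<x}.
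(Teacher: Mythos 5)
Your proposal is correct and follows essentially the same route as the paper's proof: the per-step application of Lemma \ref{lm:y<x} with $w=p$, the $2n(n+1)D_{\Omega}^2\tau$ drift bound over one sweep, extraction of $k_0$ from the telescoped one-sweep inequality, and conversion to \eqref{eq:nod-2} via \eqref{eq:vineq} applied to the empirical measure $n^{-1}\sum_i \delta_{z_i}$. The only cosmetic differences are that you extract $k_0$ by a Ces\`aro-averaging argument where the paper argues by contradiction (if the bound failed for all admissible $k$ then $d^2(p,y_{\bar{k}n})$ would become negative), and you bound the drift via $d(y_{kn},y_{kn+i}) \le 2i\tau D_{\Omega}$ rather than telescoping over sub-steps; both give identical constants.
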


We remark that, in a $\CAT(0)$-space,
we can employ as $\Omega$ a ball including $(z_i)_{i=1}^n$ and $y_0$.
This is because balls are convex by the $\CAT(0)$-inequality (or the Busemann NPC).
In a $\delta$-hyperbolic space, however, balls are not necessarily convex
and it is unclear to the author how to control
(the sum of) the additional terms in \eqref{eq:CAT0t} (or \eqref{eq:Buse})
during the recursive scheme $y_{kn+i} \in \sJ^{f_i}_{\tau}(y_{kn+i-1})$.

\begin{proof}
It follows from Lemma~\ref{lm:y<x} that
\[ d^2(p,y_{kn+i}) \le d^2(p,y_{kn+i-1})
 -2\tau \big\{ d^2(z_i,y_{kn+i}) -d^2(z_i,p) \big\} +\Theta_{\Omega} \tau\delta. \]
Summing up for $1 \le i \le n$, we have
\begin{align}
&d^2(p,y_{(k+1)n}) \nonumber\\
&\le d^2(p,y_{kn}) -2\tau \big\{ f(y_{kn}) -f(p) \big\}
 +2\tau \bigg\{ f(y_{kn}) -\sum_{i=1}^n d^2(z_i,y_{kn+i}) \bigg\} +n\Theta_{\Omega} \tau\delta.
 \label{eq:nodice}
\end{align}
Concerning the third term in the right-hand side, we infer from the triangle inequality that
\begin{align*}
\sum_{i=1}^n \big\{ d^2(z_i,y_{kn}) -d^2(z_i,y_{kn+i}) \big\}
&= \sum_{i=1}^n \sum_{l=1}^i \big\{ d^2(z_i,y_{kn+l-1}) -d^2(z_i,y_{kn+l}) \big\} \\
&\le 2D_{\Omega} \sum_{i=1}^n \sum_{l=1}^i d(y_{kn+l-1},y_{kn+l}).
\end{align*}
Then, by the choice of $y_{kn+l} \in \sJ^{f_l}_{\tau}(y_{kn+l-1})$ with $f_l=d^2(z_l,\cdot)$,
we find
\begin{equation}\label{eq:kn+l}
d(y_{kn+l-1},y_{kn+l}) =\frac{2\tau}{2\tau +1} d(z_l,y_{kn+l-1})
 \le 2D_{\Omega} \tau.
\end{equation}
Hence, we obtain
\[ \sum_{i=1}^n \big\{ d^2(z_i,y_{kn}) -d^2(z_i,y_{kn+i}) \big\}
 \le 4D_{\Omega}^2 \tau \sum_{i=1}^n i =2n(n+1)D_{\Omega}^2 \tau. \]
Plugging this into \eqref{eq:nodice} yields
\begin{equation}\label{eq:k+1}
d^2(p,y_{(k+1)n}) \le d^2(p,y_{kn}) 
 -2\tau \bigg\{ f(y_{kn}) -f(p) -\frac{n\Theta_{\Omega} \delta}{2}
 -2n(n+1)D_{\Omega}^2 \tau \bigg\}.
\end{equation}

It immediately follows from \eqref{eq:k+1} that
\[ f(y_{k_0 n}) -f(p) -\frac{n\Theta_{\Omega} \delta}{2} -2n(n+1)D_{\Omega}^2 \tau \le \ve \]
necessarily holds for some $k_0 <d^2(p,y_0)/(2\tau\ve)$.
Indeed, otherwise we have
\[ d^2(p,y_{\bar{k}n}) < d^2(p,y_0) -2\tau\bar{k}\ve \le 0 \]
with $\bar{k}$ the minimum integer not smaller than $d^2(p,y_0)/(2\tau\ve)$, a contradiction.

Finally, the second assertion \eqref{eq:nod-2} is a consequence of Proposition~\ref{pr:bary}.
Putting $\mu =n^{-1} \sum_{i=1}^n \delta_{z_i}$,
we deduce from \eqref{eq:nod-1} that
\[ W_2^2(\delta_{y_{k_0 n}},\mu) \le W_2^2(\delta_p,\mu)
 +\frac{\Theta_{\Omega}\delta}{2} +2(n+1)D_{\Omega}^2 \tau +\frac{\ve}{n}. \]
Hence, \eqref{eq:vineq} (with $\ve_1=0$) yields
\begin{align*}
d(p,y_{k_0 n}) &\le
 \sqrt{8\delta \big\{ W_1(\delta_p,\mu) +W_1(\delta_{y_{k_0 n}},\mu) \big\} +16\delta^2
 +\Theta_{\Omega}\delta +4(n+1)D_{\Omega}^2 \tau +\frac{2\ve}{n}} \\
&\le \sqrt{16D_{\Omega} \delta +16\delta^2
 +\Theta_{\Omega}\delta +4(n+1)D_{\Omega}^2 \tau +\frac{2\ve}{n}}.
\end{align*}
\if0
Now, if $f(x_{kn}) -f(p) \ge (n\Theta\delta +\ve)/2$ holds for all $k \ge 0$,
then Lemma~\ref{lm:abc} implies that
\[  \sum_{k=0}^{\infty} \tau_k \frac{\ve}{2} \le
 \sum_{k=0}^{\infty} \tau_k \bigg\{ f(x_{kn}) -f(p) -\frac{n\Theta\delta}{2} \bigg\} <\infty. \]
This is, however, a contradiction since $\sum_{k=0}^{\infty} \tau_k=\infty$.
Therefore, there exists $k' \ge 0$ such that $f(x_{k' n}) -f(p) < (n\Theta\delta +\ve)/2$.
This yields the first assertion \eqref{eq:nod-1} with $k_0=k'$.
By applying the same argument for $k \ge k'+1$,
we can also find an increasing sequence $(k_m)_{m \ge 1}$ with $k_1=k'$
satisfying $f(x_{k_m n}) -f(p) < (n\Theta\delta +\ve)/2$ for all $m \ge 1$.
Note that the second assertion \eqref{eq:nod-2} for $k_m$
then follows from Proposition~\ref{pr:bary}.
Indeed, putting $\mu =n^{-1} \sum_{i=1}^n \delta_{z_i}$,
we deduce
\[ W_2^2(\delta_{x_{k_m n}},\mu) \le W_2^2(\delta_p,\mu) +\frac{n\Theta\delta +\ve}{2}, \]
and hence
\begin{align*}
d(p,x_{k_m n}) &\le
 2\sqrt{2\delta \big\{ W_2(\delta_p,\mu) +W_2(\delta_{x_{k_m n}},\mu) \big\}
 +4\delta^2 +\frac{n\Theta\delta +\ve}{2}} \\
&\le 2\sqrt{4D_{\Omega} \delta +4\delta^2 +\frac{n\Theta\delta +\ve}{2}}.
\end{align*}

To show \eqref{eq:nod-2} for all $k \ge k'$, without loss of generality,  we assume that
$f(x_{kn}) -f(p) \ge (n\Theta\delta +\ve)/2$ holds for all $k \ge k' $ with $k \not\in \{k_m\}_{m \ge 1}$,
and that $4D_{\Omega}^2 n(n+1)\tau_k <\ve$ for all $k \ge k'$.
Then, for $k_m <k <k_{m+1}$, it follows from \eqref{eq:k+1} that
\begin{align*}
d^2(p,x_{kn})
&\le d^2(p,x_{k_m n})  -2\tau_k \bigg\{ f(x_{k_m n}) -f(p) -\frac{n\Theta\delta}{2} \bigg\} \\
&\quad -\sum_{l=k_m +1}^{k-1} \big( \ve -4D_{\Omega}^2 n(n+1) \tau_l \big) \tau_l \\
&\le 4 \bigg( 4D_{\Omega}\delta +4\delta^2 +\frac{n\Theta\delta +\ve}{2} \bigg) +n\Theta\delta\tau_k,
\end{align*}
where we used $f(p) \le f(x_{k_m n})$ in the second inequality.
This completes the proof.
\fi
\end{proof}

Thanks to \eqref{eq:nod-1}, up to $d^2(p,y_0)/(2\tau\ve)$ iterations,
$f(y_{kn})$ nearly achieves $\min_X f$ and $y_{kn}$ passes close to $p$ as in \eqref{eq:nod-2}.
We remark that the sublinear rate $\ve < d^2(p,y_0)/(2\tau k_0)$
(following from $k_0 < d^2(p,y_0)/(2\tau\ve)$) could be compared with \cite[Proposition~5.7]{OP1}.

Note also that we have an effective estimate on
the resolvent operator only when $\tau$ is larger than $\delta$
(so-called ``giant steps''; see \cite{Ohyp} for a further discussion),
thereby we did not consider $(\tau_k)_{k \ge 0}$ converging to $0$
(compare this with \cite[Theorem~5.5]{OP1}).
If we assume $\delta \le D_{\Omega}/2$ and take $\tau =\sqrt{\delta/D_{\Omega}}$,
then we have $\Theta_{\Omega} =(4\sqrt{D_{\Omega}/\delta} +8)D_{\Omega}$
and \eqref{eq:nod-2} shows
\[ d(p,y_{k_0 n}) \le \sqrt{\frac{2\ve}{n}} +O(\delta^{1/4}). \]

\section{A law of large numbers}\label{sc:LLN}

We next consider a law of large numbers in our setting.
Our formulation follows Sturm's \cite[Theorem~4.7]{St1} for $\CAT(0)$-spaces.
We refer to \cite[Theorem~6.7]{OP1} and \cite[Theorem~3]{Yo3} for some generalizations
to other (upper and lower) curvature bounds.

\begin{theorem}[Law of large numbers]\label{th:LLN}
Let $(Z_i)_{i \ge 1}$ be a sequence of independent, identically distributed random variables
on a probability space taking values in $X$ with distribution $\mu \in \cP(X)$,
and take $p \in \cB(\mu,0)$.
Given $\tau>0$ and an arbitrary initial point $S_0 \in X$,
we define a sequence $(S_k)_{k \ge 0}$ in $X$ recursively by
\[ S_{k+1} \in \sJ^{f_k}_{\tau}(S_k), \qquad f_k:=d^2(Z_{k+1},\cdot). \]
Assume that $p$, $\supp\,\mu$ and $(S_k)_{k \ge 0}$ are all included in a bounded set $\Omega \subset X$.
Then, for any $\ve>0$, we have
\begin{equation}\label{eq:LLN}
\E\big[ d^2(p,S_{k_0}) \big]
 \le 8D_{\Omega}^2 \tau +(\Theta_{\Omega} +16D_{\Omega} +16\delta)\delta +\ve
\end{equation}
for some $k_0 <d^2(p,S_0)/(\tau\ve)$, where
$D_{\Omega}:=\diam(\Omega)$ and $\Theta_{\Omega}$ is defined as in Theorem~$\ref{th:nodice}$.
\end{theorem}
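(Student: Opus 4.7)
The plan is to mimic the structure of Theorem~\ref{th:nodice} in the stochastic setting: derive a single-step estimate from Lemma~\ref{lm:y<x}, take conditional expectation with respect to the new observation $Z_{k+1}$, run a stopping-time argument to pin down $k_0$ in the advertised range, and finally use the variance inequality \eqref{eq:vineq} to pass from a variance estimate to a squared-distance estimate.

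Applying Lemma~\ref{lm:y<x} with $w=p$, $x=S_k$, $y=S_{k+1}$, $z=Z_{k+1}$ gives
\[
d^2(p,S_{k+1}) \le d^2(p,S_k) - 2\tau\bigl\{ d^2(Z_{k+1},S_{k+1}) - d^2(Z_{k+1},p) \bigr\} + \Theta_{\Omega}\tau\delta.
\]
I would then take conditional expectation with respect to $\mathcal{F}_k := \sigma(Z_1,\dots,Z_k)$. Since $Z_{k+1}$ is independent of $\mathcal{F}_k$ with distribution $\mu$, one has $\E[d^2(Z_{k+1},p) \mid \mathcal{F}_k] = V(p)$, where $V(x):=\int_X d^2(x,z)\,\mu(dz)$. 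Although $S_{k+1}$ itself depends on $Z_{k+1}$, the explicit description $S_{k+1} = \gamma((2\tau+1)^{-1})$ on a minimal geodesic from $Z_{k+1}$ to $S_k$ yields the exact identity $d(Z_{k+1},S_{k+1}) = d(Z_{k+1},S_k)/(2\tau+1)$, whence $\E[d^2(Z_{k+1},S_{k+1}) \mid \mathcal{F}_k] = V(S_k)/(2\tau+1)^2$. Setting $G_k := V(S_k) - V(p) \ge 0$, invoking \eqref{eq:vineq} (with $\ve_1=G_k$, $\ve_2=0$) in the form $G_k \ge \tfrac{1}{2}d^2(p,S_k) - 8D_{\Omega}\delta - 8\delta^2$, and using $V(p) \le D_{\Omega}^2$ to absorb the residual $V(p)$-contribution, one extracts a one-step inequality of the form
\[
\E\bigl[d^2(p,S_{k+1}) \bigm| \mathcal{F}_k\bigr] \le d^2(p,S_k) - \frac{\tau\, d^2(p,S_k)}{(2\tau+1)^2} + B,
\]
where $B$ aggregates the $8D_{\Omega}^2\tau$, $16(D_{\Omega}+\delta)\delta$ and $\Theta_{\Omega}\tau\delta$ contributions of the target bound.

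Taking total expectation and writing $a_k := \E[d^2(p,S_k)]$, I would then apply the stopping-time logic from the proof of Theorem~\ref{th:nodice}: as long as $a_k$ exceeds the target bound $8D_{\Omega}^2\tau + (\Theta_{\Omega}+16D_{\Omega}+16\delta)\delta + \ve$, the one-step inequality forces a strict descent of at least~$\tau\ve$ per step; since $a_k \ge 0$, these descents cannot persist for more than $d^2(p,S_0)/(\tau\ve)$ iterations, producing some $k_0 < d^2(p,S_0)/(\tau\ve)$ for which \eqref{eq:LLN} holds. The hard part is dealing with the dependence of $S_{k+1}$ on $Z_{k+1}$ inside the conditional expectation---here the exact geodesic parametrization is precisely what makes the computation tractable---and then tracking the various $(2\tau+1)^2$ prefactors carefully so that they collapse into the clean coefficients in the statement. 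Exactly as for Theorem~\ref{th:nodice}, the estimate is informative only when $\tau$ is at least of order $\delta$, and the balance $\tau \sim \sqrt{\delta}$ yields the $O(\sqrt{\delta})$ decay advertised in the introduction.
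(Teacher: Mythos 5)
Your architecture is the same as the paper's (Lemma~\ref{lm:y<x} with $w=p$, $x=S_k$, $y=S_{k+1}$, $z=Z_{k+1}$, conditioning on $\mathcal{F}_k$, the variance inequality \eqref{eq:vineq} with $\ve_1=G_k$, $\ve_2=0$, then a stopping argument), but there is a genuine quantitative gap in your one-step estimate that breaks the descent step as you state it. By evaluating $\E[d^2(Z_{k+1},S_{k+1})\mid\mathcal{F}_k]=V(S_k)/(2\tau+1)^2$ and then isolating $G_k:=V(S_k)-V(p)$, the negative drift you retain is $-2\tau G_k/(2\tau+1)^2$, not $-2\tau G_k$; this is exactly why your displayed inequality has the contraction coefficient $\tau/(2\tau+1)^2$. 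With additive term $B$ of size $\tau T$, where $T:=8D_{\Omega}^2\tau+(\Theta_{\Omega}+16D_{\Omega}+16\delta)\delta$, a descent of at least $\tau\ve$ requires $\E[d^2(p,S_k)]\ge(2\tau+1)^2(T+\ve)$, not merely $>T+\ve$ as you claim. So the stopping argument run on your inequality only produces a bound of the form $\E[d^2(p,S_{k_0})]\le(2\tau+1)^2T+O(\ve)$ (or a $k_0$-range inflated by $(2\tau+1)^2$), which is strictly weaker than \eqref{eq:LLN}: neither the stated constants nor $k_0<d^2(p,S_0)/(\tau\ve)$ follow.

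The repair is the decomposition the paper actually uses: inside the bracket coming from Lemma~\ref{lm:y<x}, add and subtract $d^2(Z_{k+1},S_k)$, keep the full term $-2\tau\{d^2(Z_{k+1},S_k)-d^2(Z_{k+1},p)\}$, whose conditional expectation is exactly $-2\tau G_k$ with no $(2\tau+1)^{-2}$ loss, and bound the displacement term by $2\tau\{d^2(Z_{k+1},S_k)-d^2(Z_{k+1},S_{k+1})\}\le 4\tau D_{\Omega}d(S_k,S_{k+1})\le 8D_{\Omega}^2\tau^2$, using $d(S_k,S_{k+1})=\tfrac{2\tau}{2\tau+1}d(Z_{k+1},S_k)\le 2D_{\Omega}\tau$. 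Equivalently, within your exact-identity computation, split relative to $V(S_k)$ rather than $V(p)$: $-2\tau\{V(S_k)/(2\tau+1)^2-V(p)\}=-2\tau G_k+2\tau V(S_k)\{1-(2\tau+1)^{-2}\}\le -2\tau G_k+8D_{\Omega}^2\tau^2$, since $1-(2\tau+1)^{-2}=4\tau(\tau+1)/(2\tau+1)^2\le 4\tau$ and $V(S_k)\le D_{\Omega}^2$. Either way you recover the paper's recursion $\E[d^2(p,S_{k+1})]\le(1-\tau)\E[d^2(p,S_k)]+8D_{\Omega}^2\tau^2+(\Theta_{\Omega}+16D_{\Omega}+16\delta)\delta\tau$, for which your stopping-time logic is then correct and yields \eqref{eq:LLN} with $k_0<d^2(p,S_0)/(\tau\ve)$; the rest of your proposal (independence giving $V(p)$, and the use of \eqref{eq:vineq} to convert $G_k$ into $\tfrac12 d^2(p,S_k)-8D_{\Omega}\delta-8\delta^2$) matches the paper.
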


\begin{proof}
We can apply a calculation similar to the proof of Theorem~\ref{th:nodice}.
It follows from Lemma~\ref{lm:y<x} that
\begin{align*}
d^2(p,S_{k+1})
&\le d^2(p,S_k) -2\tau \big\{ d^2(Z_{k+1},S_{k+1}) -d^2(Z_{k+1},p) \big\} +\Theta_{\Omega} \tau\delta \\
&= d^2(p,S_k) -2\tau \big\{ d^2(Z_{k+1},S_k) -d^2(Z_{k+1},p) \big\} \\
&\quad +2\tau \big\{ d^2(Z_{k+1},S_k) -d^2(Z_{k+1},S_{k+1}) \big\} +\Theta_{\Omega} \tau\delta \\
&\le d^2(p,S_k) -2\tau \big\{ d^2(Z_{k+1},S_k) -d^2(Z_{k+1},p) \big\}
 +8D_{\Omega}^2 \tau^2 +\Theta_{\Omega} \tau\delta,
\end{align*}
where we used
\[ d^2(Z_{k+1},S_k) -d^2(Z_{k+1},S_{k+1})
 \le 2D_{\Omega} d(S_k,S_{k+1}) \le 4D_{\Omega}^2 \tau \]
in the latter inequality (recall also \eqref{eq:kn+l}).
Taking the expectations in $Z_{k+1}$ conditioned on $\mathcal{F}_k :=\{Z_1,\ldots,Z_k\}$
and applying the variance inequality \eqref{eq:vineq}
(with $x=p$, $y=S_k$, $\ve_1=0$), we obtain
\begin{align*}
\E\big[ d^2(p,S_{k+1}) \,\big|\, \mathcal{F}_k \big]
&\le d^2(p,S_k) -2\tau \E\big[ d^2(Z_{k+1},S_k) -d^2(Z_{k+1},p) \big]
 +8D_{\Omega}^2 \tau^2 +\Theta_{\Omega} \delta\tau \\
&\le d^2(p,S_k) -\tau \big\{ d^2(p,S_k) -16D_{\Omega} \delta -16\delta^2 \big\}
 +8D_{\Omega}^2 \tau^2 +\Theta_{\Omega} \delta\tau \\
&= (1-\tau) d^2(p,S_k)
 +8D_{\Omega}^2 \tau^2 +(\Theta_{\Omega} +16D_{\Omega} +16\delta)\delta\tau.
\end{align*}
Taking the expectations once again, we arrive at
\begin{equation}\label{eq:LLN+1}
\E\big[ d^2(p,S_{k+1}) \big]
 \le (1-\tau) \E\big[ d^2(p,S_k) \big]
 +8D_{\Omega}^2 \tau^2 +(\Theta_{\Omega} +16D_{\Omega} +16\delta)\delta\tau.
\end{equation}

In the same way as in the proof of Theorem~\ref{th:nodice},
we infer from \eqref{eq:LLN+1} that 
\[ \E\big[ d^2(p,S_{k_0}) \big]
 \le 8D_{\Omega}^2 \tau +(\Theta_{\Omega} +16D_{\Omega} +16\delta)\delta +\ve \]
necessarily holds for some $k_0 <d^2(p,S_0)/(\tau\ve)$.
This completes the proof.
\end{proof}

When we assume $\delta \le D_{\Omega}/2$ and choose $\tau=\sqrt{\delta/D_{\Omega}}$
as in the discussion after Theorem~\ref{th:nodice}, \eqref{eq:LLN} yields
\[ \E\big[ d^2(p,S_{k_0}) \big] \le \ve +O(\sqrt{\delta}). \]

An advantage of the above recursive choice of $(S_k)_{k \ge 0}$ is that
$S_{k+1}$ is concretely given as a point on a geodesic between $S_k$ and $Z_{k+1}$
without any knowledge about the construction of barycenters of probability measures
(though minimal geodesics are not unique in Gromov hyperbolic spaces).

Employing ``empirical means'' instead of $(S_k)_{k \ge 0}$,
one can also show the following version of law of large numbers (see \cite[Proposition~6.6]{St1}).

\begin{proposition}[Empirical law of large numbers]\label{pr:LLN}
Let $(X,d)$ be complete and separable,
$(Z_i)_{i \ge 1}$ be a sequence of independent, identically distributed random variables
on a probability space taking values in $X$ with distribution $\mu \in \cP(X)$ of bounded support,
and take $p \in \cB(\mu,0)$.
Then,
\[ \sigma_k \in \cB\Bigg( \frac{1}{k} \sum_{i=1}^k \delta_{Z_i},0 \Bigg) \]
satisfies
\[ \limsup_{k \to \infty} d(p,\sigma_k)
 \le 8\delta \vee \sqrt{54D \sqrt{D+\delta} \sqrt{\delta}} \]
almost surely, where we set $D:=3\,\diam(\supp\,\mu)$.
\end{proposition}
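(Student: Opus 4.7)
The plan is to combine the Wasserstein contraction property of Theorem~\ref{th:Wcont} with the classical almost-sure convergence of empirical measures in $L^1$-Wasserstein distance. Set $\mu_k := k^{-1}\sum_{i=1}^k \delta_{Z_i}$ and observe that, almost surely, $\supp\,\mu_k \subset \supp\,\mu$ for every $k$ because $\mu(\supp\,\mu)=1$.

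First I would control the diameter quantity $D_2$ that appears in Theorem~\ref{th:Wcont}. Comparing the minimizer $p \in \cB(\mu,0)$ with any $z_0 \in \supp\,\mu$, the defining inequality of the barycenter gives
\[ \int_X d^2(p,z)\,\mu(dz) \le \int_X d^2(z_0,z)\,\mu(dz) \le \diam(\supp\,\mu)^2, \]
which forces the existence of $z^* \in \supp\,\mu$ with $d(p,z^*) \le \diam(\supp\,\mu)$, and hence $d(p,z) \le 2\diam(\supp\,\mu)$ for every $z \in \supp\,\mu$ by the triangle inequality. The same reasoning applied to $\sigma_k \in \cB(\mu_k,0)$ against $\supp\,\mu_k$ yields $d(\sigma_k,w) \le 2\diam(\supp\,\mu_k) \le 2\diam(\supp\,\mu)$ for every $w \in \supp\,\mu_k$, and chaining the two estimates through any common sample point gives $d(p,\sigma_k) \le 3\diam(\supp\,\mu)$. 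Altogether,
\[ D_2 := \diam\big((\{p,\sigma_k\} \cup \supp\,\mu \cup \supp\,\mu_k,d)\big) \le 3\diam(\supp\,\mu) = D. \]

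Next I would apply Theorem~\ref{th:Wcont} to the pair $(\mu,\mu_k)$ with $\ve_1 = \ve_2 = 0$, using the bound $D_2 \le D$ and monotonicity of the right-hand side in $D_2$, to obtain
\[ d(p,\sigma_k) \le W_1(\mu,\mu_k) + 8\delta \vee \sqrt{54 D\sqrt{D+\delta}\sqrt{\delta}}. \]
The remaining ingredient is that $W_1(\mu_k,\mu) \to 0$ almost surely. Since $(X,d)$ is complete and separable, Varadarajan's theorem ensures $\mu_k \to \mu$ weakly almost surely; because all $\mu_k$ are supported in the bounded set $\supp\,\mu$, weak convergence upgrades to $L^1$-Wasserstein convergence (the first moments of $\mu_k$ converge automatically, being uniformly bounded by $\diam(\supp\,\mu)$ against any fixed reference point). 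Taking $\limsup_{k \to \infty}$ in the previous display yields the asserted inequality.

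The only nontrivial step is the almost-sure $W_1$-convergence of the empirical measures, which is a standard Polish-space result that uses both hypotheses of the proposition in an essential way; every other step reduces to a direct triangle-inequality estimate and the already established contraction theorem. No issue of attainment of the infimum defining $\cB(\mu_k,0)$ or $\cB(\mu,0)$ arises here since existence is assumed in the hypothesis "take $p \in \cB(\mu,0)$" and in the choice of $\sigma_k$.
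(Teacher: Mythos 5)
Your proposal is correct and follows essentially the same route as the paper's proof: Varadarajan's theorem together with the bounded support gives $W_1(\mu_k,\mu) \to 0$ almost surely, the barycenter property forces $p$ and $\sigma_k$ to lie within $\diam(\supp\,\mu)$ of the supports so that $D_2 \le 3\,\diam(\supp\,\mu) = D$, and Theorem~\ref{th:Wcont} with $\ve_1=\ve_2=0$ then yields the stated bound in the limit. The only cosmetic remark is that to get $d(p,\sigma_k) \le 3\,\diam(\supp\,\mu)$ one should chain through a point $w^* \in \supp\,\mu_k$ with $d(\sigma_k,w^*) \le \diam(\supp\,\mu_k)$ rather than using the uniform $2\,\diam$ bound on both legs, which is exactly what your estimate provides.
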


\begin{proof}
It follows from Varadarajan's theorem (see \cite[Theorem~11.4.1]{Du}) that
\[ \nu_k :=\frac{1}{k} \sum_{i=1}^k \delta_{Z_i} \]
weakly converges to $\mu$ almost surely,
which implies $W_1(\mu,\nu_k) \to 0$ (see \cite[Theorem~7.12]{Vi}).
Observe that $d(p,\supp\,\mu) \le \diam(\supp\,\mu)$ necessarily holds and similarly
\[ d(\sigma_k,\supp\,\mu) \le d(\sigma_k,\supp\,\nu_k)
 \le \diam(\supp\,\nu_k) \le \diam(\supp\,\mu). \]
Hence, we have $\diam((\{p,\sigma_k\} \cup \supp\,\mu,d)) \le D$ and
Theorem~\ref{th:Wcont} (with $\ve_1=\ve_2=0$) yields
\[ \limsup_{k \to \infty} d(p,\sigma_k)
 \le 8\delta \vee \sqrt{54D \sqrt{D+\delta} \sqrt{\delta}} \]
by the choices of $p$ and $\sigma_k$.
\end{proof}

We finally discuss two possible directions of further research.

\begin{remark}[Further problems]\label{rm:final}
\begin{enumerate}[(a)]
\item
We have studied in \cite{Ohyp} discrete-time gradient flows for geodesically convex functions
(namely, they are convex along geodesics).
However, despite the negative curvature nature, distance functions on Gromov hyperbolic spaces
are not geodesically convex due to the local flexibility.
Therefore, it is an intriguing problem to introduce an appropriate notion of ``roughly convex functions''
on Gromov hyperbolic spaces, including the distance function $d(z,\cdot)$ or its square.

\item
Compared with the contraction estimates in \cite{Ohyp} directly akin to trees,
the results in the present article are generalizations from $\CAT(0)$-spaces to $\delta$-hyperbolic spaces.
Therefore, on the one hand, it may be possible to improve our estimates (for example, the order of $\delta$)
via an analysis closer to the case of trees.
Specifically, it is desirable that we can reduce the dependence on $D_2,D_{\Omega}$
in Theorems~\ref{th:Wcont}, \ref{th:nodice}, \ref{th:LLN}.
On the other hand, there seems a room for further generalizations
to metric spaces satisfying the $\CAT(0)$-inequality with small perturbations in some way
(cf.\ Lemma~\ref{lm:CAT0t}).
\end{enumerate}
\end{remark}

\textit{Acknowledgements.}
This work was supported in part by JSPS Grant-in-Aid for Scientific Research (KAKENHI)
19H01786, 22H04942.

{\small

}

\end{document}